\newcommand{\lora}{\longrightarrow}
\definecolor{Aranka}{RGB}{0, 135, 85}
\definecolor{Tomer}{RGB}{200, 20, 0}
\newtheorem{theorem}{Theorem}[section]
\newtheorem{lemma}[theorem]{Lemma}
\newtheorem{proposition}[theorem]{Proposition}
\newtheorem{corollary}[theorem]{Corollary}
\newtheorem{definition}[theorem]{Definition}
\theoremstyle{definition}
\newtheorem{remark}[theorem]{Remark}
\newtheorem{example}[theorem]{Example}
\newtheorem{conjecture}[theorem]{Conjecture}
\newcommand{\Pcal}{\mathcal{P}}
\newcommand{\Acal}{\mathcal{A}}
\newcommand{\Ecal}{{\mathcal E}}
\newcommand{\Rcal}{{\mathcal R}}
\newcommand{\CC}{\mathbb{C}}
\newcommand{\ZZ}{\mathbb{Z}}
\newcommand{\FF}{\mathbb{F}}
\newcommand\C{\mathbb C}
\newcommand\N{\mathbb N}
\newcommand\Z{\mathbb Z}
\newcommand{\cay}{\text{Cay}}
\newcommand\G{\Gamma}
\newcommand{\epsi}{\varepsilon}
\newcommand{\goes}{\longrightarrow}
\newcommand{\f}{\varphi}
\newcommand{\g}{\gamma}
\newcommand{\spec}{\text{spec}}
\newcommand{\Prob}{\text{Prob}}
\newcommand{\pr}{{\rm Prob}}
\newcommand{\Image}{\text{Im}}
\newcommand{\goesn}{\underset{n\to\infty}{\goes}}
\newcommand{\ler}[1]{\left( #1 \right)} 
\newcommand{\lers}[1]{\left\{ #1 \right\}} 
\newcommand{\lesq}[1]{\left[ #1 \right]}
\newcommand{\abs}[1]{\left| #1 \right|} 
\newcommand{\innprod}[1]{\left< #1 \right>} 
\newcommand{\acts}{\curvearrowright}
\newcommand{\act}{\curvearrowright}
\title{Retraction Theorems for Group Compactifications}
\patchcmd\maketitle{\def\@makefnmark{\rlap{\@textsuperscript{\normalfont\@thefnmark}}}}{}{}{}
\def\thanksAAffil#1{
  \footnotemarkAAffil\protected@xdef\@thanks{\@thanks%
        \protect\footnotetextAAffil[\the \c@footnoteAAffil]{#1}}%
}
\def\thanksANote#1{%
  \footnotemarkANote%
  \protected@xdef\@thanks{\@thanks%
        \protect\footnotetextANote[\the \c@footnoteANote]{#1}}%
}
\author{
  Yair Hartman%
  \thanksAAffil{Ben-Gurion University of the Negev; email: hartmany@bgu.ac.il}%
  , %
  Aranka Hrušková%
  \thanksAAffil{The Weizmann Institute of Science; email: umim.cist@gmail.com}%
  , %
  Mehrdad Kalantar%
  \thanksAAffil{University of Houston; email: mkalantar@uh.edu}%
  , %
Tomer Zimhoni%
  \thanksAAffil{Ben-Gurion University of the Negev; email: zimhoni@post.bgu.ac.il}%
}
\date{}
\begin{document}

\maketitle

\begin{abstract}
We characterize group compactifications of discrete groups for which there exists an equivariant retraction onto the boundary. In particular, we prove an equivariant analogue of Brouwer's No-Retraction theorem for large classes of group compactifications, which includes actions of hyperbolic groups on their Gromov boundary.
\end{abstract}

\section{Introduction}

One of the original inspirations for the modern theory of boundary actions of topological groups was the case of the compact unit disc $\mathbb{D}$, where the harmonic functions are in correspondence with functions on the boundary $\partial\mathbb{D}$.
This inspired Furstenberg \cite{Fur63,Fur73} to introduce far-reaching generalizations of compactifications of topological groups, whose resulting boundary theories have proven to be a powerful tool in the study of non-amenable groups and various structures corresponding to them, most prominently in rigidity theory of semisimple groups and their lattices \cite{Fur73,margulis1978quotient,stuck1994stabilizers,bader2022charmenability}.

Returning to the classical case of the compact disc $\mathbb{D}$, one of the most celebrated results in the context is Brouwer's No-Retraction theorem, which states that there is no retraction from $\mathbb{D}$ onto its boundary $\partial\mathbb{D}$.

The main goal of this paper is to investigate analogues of Brouwer's theorem in the context of group compactifications considered by Furstenberg \cite{Fur71}.

Given a countable discrete group $\G$, by a $\G$-compactification we mean a compactification $\overline{\G}$ such that the action $\G\act \G$ by left multiplication extends to a continuous action $\G\act \overline{\G}$. We focus particularly on the case where the resulting action $\G\act \partial\overline{\G}$ of $\G$ on the boundary $\partial\overline{\G}:=\overline{\G}\backslash \G$ is minimal, that is, has no proper closed $\G$-invariant subspace.

Among the most natural examples of this type of compactifications is the Gromov compactification of the free group $\FF_n$ (or more generally, hyperbolic groups). We prove an equivariant analogue of Brouwer's No-Retraction theorem in this case.

\begin{theorem} \label{Brouwer for the gromov comp.}
Let $\Gamma$ be a non-elementary hyperbolic group, and denote by $\partial\Gamma$ its Gromov boundary and by $\overline{\G}$ the Gromov compactification of $\G$. Then there exists {\bf no} $\Gamma$-equivariant retraction $\overline{\Gamma} \to \partial\Gamma$.
\end{theorem}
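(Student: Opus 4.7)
\emph{Setup.} The plan is to assume such a retraction $r\colon \overline{\Gamma} \to \partial\Gamma$ exists and derive a contradiction. Let $\xi := r(e) \in \partial\Gamma$; by $\Gamma$-equivariance $r(\gamma) = \gamma\xi$ for every $\gamma \in \Gamma$, and since $r|_{\partial\Gamma} = \mathrm{id}$, continuity of $r$ collapses to the single concrete constraint: \emph{for every sequence $\gamma_n \in \Gamma$ with $\gamma_n \to \zeta \in \partial\Gamma$ in $\overline{\Gamma}$, one has $\gamma_n \xi \to \zeta$ in $\partial\Gamma$.} The goal is to exhibit some $\zeta \neq \xi$ and a sequence $\gamma_n$ violating this.

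\emph{Warm-up.} Suppose first that $\xi$ is the repelling fixed point $g^-$ of some loxodromic $g \in \Gamma$. Then $g^n \to g^+$ in $\overline{\Gamma}$ while $g^n \xi = g^n g^- = \xi$ for every $n$, contradicting the constraint at $\zeta = g^+$. Running this on $g^{-1}$ also rules out $\xi$ being a loxodromic attractor, so we may assume $\xi$ is not a loxodromic fixed point.

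\emph{General case.} Fix any $\eta \in \partial\Gamma \setminus \{\xi\}$ and any loxodromic $h \in \Gamma$ with translation length $\tau > 0$. Since $\Gamma$ is non-elementary hyperbolic, the diagonal action $\Gamma \curvearrowright (\partial\Gamma \times \partial\Gamma) \setminus \Delta$ is minimal, so there exist $\delta_k \in \Gamma$ with $(\delta_k h^+, \delta_k h^-) \to (\eta, \xi)$. The conjugates $g_k := \delta_k h \delta_k^{-1}$ are loxodromic with $(g_k^+, g_k^-) \to (\eta, \xi)$ and common translation length $\tau$. Fix a visual metric $d$ on $\partial\Gamma$ and set $\epsilon_k := d(\xi, g_k^-) > 0$, which is nonzero by the warm-up. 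The north-south dynamics of $g_k$ give uniform expansion of rate $e^{\tau}$ near $g_k^-$, so choosing $n_k := \lfloor -\log\epsilon_k/(2\tau)\rfloor$ yields $n_k \tau \to \infty$ and $d(g_k^{n_k}\xi, g_k^-) \lesssim \epsilon_k^{1/2} \to 0$. The sequence $\gamma_k := g_k^{n_k}$ then satisfies $\gamma_k \xi \to \xi$ in $\partial\Gamma$ (since $g_k^{n_k}\xi$ stays within $\epsilon_k^{1/2}$ of $g_k^- \to \xi$), while $\gamma_k \to \eta$ in $\overline{\Gamma}$ via the $\delta$-hyperbolic estimate $(\gamma_k \mid \eta)_e \geq \min\{(\gamma_k \mid g_k^+)_e,\, (g_k^+ \mid \eta)_e\} - \delta$, both terms of which tend to infinity ($(\gamma_k \mid g_k^+)_e$ from $n_k \tau \to \infty$ along the quasi-geodesic orbit of $g_k$, and $(g_k^+ \mid \eta)_e$ from $g_k^+ \to \eta$). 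Taking $\zeta = \eta$ contradicts the continuity constraint.

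\emph{Main obstacle.} The substantive step is the quantitative balancing of $n_k$: it must be large enough to push $g_k^{n_k}$ close to $g_k^+$ in $\overline{\Gamma}$ (and thus to $\eta$), yet small enough that $g_k^{n_k}\xi$ has not yet escaped the expanding neighborhood of $g_k^-$. The single choice $n_k \asymp |\log\epsilon_k|/\tau$ serves both roles precisely because the $g_k$ are all $\Gamma$-conjugate to the same $h$, giving them a common translation length and uniformly comparable contraction/expansion rates in the visual metric. All background tools invoked --- minimality of $\Gamma \curvearrowright (\partial\Gamma)^2 \setminus \Delta$, north-south dynamics of loxodromics, visual-metric estimates, and the $\delta$-hyperbolic inequality for Gromov products --- are standard for non-elementary hyperbolic groups.
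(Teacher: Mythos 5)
Your reduction is fine and matches the paper's: a $\Gamma$-equivariant retraction $r$ forces $r(\gamma)=\gamma\xi$ with $\xi=r(e)$, and continuity gives exactly the constraint ``$\gamma_n\to\zeta$ in $\overline{\Gamma}$ implies $\gamma_n\xi\to\zeta$'' (this is the easy direction of the paper's Theorem \ref{one pt iff extension}, saying the compactification would have to be point-orbital through $\xi$). The warm-up ruling out $\xi$ being a loxodromic fixed point is also correct. The problem is the general case: the claim that the diagonal action $\Gamma\curvearrowright(\partial\Gamma\times\partial\Gamma)\setminus\Delta$ is \emph{minimal} is false for non-elementary hyperbolic groups. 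What is true is topological transitivity and density of the set of all axis pairs $(g^+,g^-)$ taken over \emph{all} loxodromics $g$; but the orbit of a \emph{single} fixed-point pair $(h^+,h^-)$ is closed and discrete in $\partial^2\Gamma$. Indeed, if $\delta_k(h^+,h^-)\to(\eta,\xi)$ with $\eta\neq\xi$, then the Gromov products $\langle\delta_k h^+,\delta_k h^-\rangle_e$ stay bounded, so the translated quasi-axes $\delta_k\{h^n\}$ all pass within bounded distance of $e$; hence $\delta_k h^{n_k}$ lies in a fixed finite set for suitable $n_k$, and along a subsequence $\delta_k=t\,h^{-n_k}$ for a fixed $t$, so $\delta_k(h^+,h^-)=t(h^+,h^-)$ is eventually constant. (Already for $\FF_2$ with $h=a$ one checks directly that the orbit of $(a^\infty,a^{-\infty})$ consists of pairs $(wa^\infty,wa^{-\infty})$ and is closed and nowhere dense.) Since your warm-up has reduced to the case where $\xi$ is \emph{not} a loxodromic fixed point, the pair $(\eta,\xi)$ lies outside this closed orbit, so the conjugating elements $\delta_k$ you need do not exist at all. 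With them goes the common translation length $\tau$ and the uniform north--south estimates — precisely the ingredient you identify as carrying the quantitative balancing of $n_k$ — so the general case does not go through; replacing the $g_k$ by non-conjugate loxodromics with $(g_k^+,g_k^-)\to(\eta,\xi)$ is possible but forfeits the uniformity, and the argument would need genuinely new input to control the $k$-dependent constants.

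For comparison, the paper avoids all of this with a short direct argument that $\tau_G\neq\tau_{\xi}$: take group elements $\g_n$ along a geodesic ray from $e$ to $\xi$ and look at their inverses. Since $e$ lies on a geodesic from $\g_m^{-1}$ to $\g_m^{-1}\g_n$, one gets $\langle\,\g_m^{-1},\g_m^{-1}\xi\,\rangle_e=0$ for all $m$; passing to a subsequence with $\g_{m_j}^{-1}\to z\in\partial\Gamma$, the retraction constraint would force $\g_{m_j}^{-1}\xi\to z$ as well, and the $\delta$-hyperbolic inequality for Gromov products then contradicts the vanishing products. If you want to salvage your approach, the lesson from the paper's proof is that one should produce the contradicting sequence from the geometry of a single geodesic toward $\xi$ (where all constants are controlled by the hyperbolicity constant alone), rather than trying to import uniform dynamics from a conjugacy class of loxodromics.
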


Some remarks are in order. First, we note that the condition of equivariance is necessary in this context. Indeed, some retraction $\Gamma\cup\partial\Gamma \to \partial\Gamma$ always exists owing to $\Gamma$ being discrete.
Second, we will see that the ``no-retraction'' property for a $\G$-compactification is essentially related to the manner of ``gluing'' of the group $\G$ to the boundary of the compactification. Concretely, we will see that there is an $\FF_2$-compactification $\overline{\FF_2}$ whose boundary $\partial\overline{\FF_2}$ is the Gromov boundary $\partial \FF_2$, and such that there exists a retraction $\overline{\FF_2}\to \partial\overline{\FF_2}$.

Let us recall the construction of the $\G$-compactifications studied by Furstenberg in \cite{Fur71}.

Let $X$ be a compact metrizable minimal $\G$-space, and let $\nu\in\pr(X)$ be a contractive measure (i.e., the weak* closure of the orbit $\{\g\nu : \g\in \G\}$ contains Dirac measures). Define the topology $\tau_\nu$ on the disjoint union $\G\cup X$ to be the one that on $X$ restricts to the original topology on $X$, on $\G$ to the discrete topology on $\G$, and such that for an unbounded sequence $(\g_n)$ in $\G$ and $x\in X$,  $\g_n \to x$ if and only if $\g_n\nu \to \delta_x$ in the weak* topology on $\pr(X)$.
In Lemma~\ref{lem: measured compactification} we show that there indeed exists a unique topology on $\G\cup X$ that satisfies the above properties, and it does define a $\G$-compactification. We refer to this class of $\G$-compactifications as \emph{orbital} compactifications, and if $\nu=\delta_{x_0}$ is a Dirac measure at some point $x_0\in X$, we call it a \emph{point-orbital} compactification.

Examples \ref{example: two points compactification, non-trivial} and \ref{example: infinite dihedral grp} show that not all $\G$-compactifications are orbital, however, we prove the following.

\begin{theorem} \label{non-Dirac iff Brouwer}
Let $\overline{\G}$ be a $\G$-compactification with minimal boundary $\partial \overline{\G}$. Then there exists a retraction $\overline{\G}\to\partial \overline{\G}$ if and only if $\overline{\G}$ is point-orbital.
\end{theorem}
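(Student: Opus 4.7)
The plan is to handle the two implications separately. The backward direction (point-orbital $\Rightarrow$ retraction exists) will be a direct construction, while the forward direction (retraction exists $\Rightarrow$ point-orbital) is the substantive half, since it requires recognizing an abstractly given $\G$-compactification as a concrete Furstenberg-type orbital construction for a well-chosen basepoint.

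For the backward direction, I assume $\overline{\G}$ is the point-orbital compactification associated with a minimal $\G$-space $X$ and a point $x_0 \in X$, so $\overline{\G} = \G \cup X$ carries the topology $\tau_{\delta_{x_0}}$. The natural candidate is $r \colon \overline{\G} \to X$ defined by $r(\g) = \g x_0$ for $\g \in \G$ and $r|_X = \mathrm{id}_X$. Equivariance is immediate from the formula; continuity reduces to the claim that if $\g_n \to \xi \in X$ in $\overline{\G}$, then $r(\g_n) = \g_n x_0 \to \xi$, which is exactly the defining property of $\tau_{\delta_{x_0}}$.

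For the forward direction, let $r \colon \overline{\G} \to \partial \overline{\G}$ be an equivariant retraction and set $x_0 := r(e) \in \partial \overline{\G}$. Equivariance forces $r(\g) = \g \cdot r(e) = \g x_0$. With $X := \partial \overline{\G}$ (compact, metrizable, minimal by hypothesis) and $\nu := \delta_{x_0}$ (trivially contractive, as its whole orbit consists of Dirac measures), I want to show that the given topology on $\overline{\G}$ coincides with the orbital topology $\tau_\nu$ supplied by Lemma~\ref{lem: measured compactification}. The two topologies already agree on $X$ and on $\G$ (discrete on both sides), so the crux is matching convergence across the boundary, i.e., showing
\[
\g_n \to \xi \text{ in } \overline{\G} \iff \g_n x_0 \to \xi \text{ in } X
\]
for an unbounded sequence $(\g_n) \subset \G$ and $\xi \in X$. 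The implication $\Rightarrow$ is immediate from continuity of $r$. For $\Leftarrow$, I argue by contradiction: if $\g_n x_0 \to \xi$ but $\g_n \not\to \xi$, I extract (using metrizability) a subsequence $\g_{n_k} \to \xi' \in \overline{\G}$ with $\xi' \neq \xi$; since each element of $\G$ is isolated in $\overline{\G}$, $\xi'$ must lie in $\partial \overline{\G}$. Continuity of $r$ along $(\g_{n_k})$ then gives $\g_{n_k} x_0 = r(\g_{n_k}) \to \xi'$, contradicting $\g_n x_0 \to \xi$.

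The main point I expect to watch carefully is the clean identification of the two topologies via their convergent sequences, together with the (mild) use of metrizability and the isolated-point property of $\G$ inside $\overline{\G}$. The conceptual core, however, is short: the equivariant retraction $r$ singles out the basepoint $x_0 = r(e)$, and its continuity alone drives both halves of the boundary-convergence equivalence that pins down $\overline{\G}$ as the point-orbital compactification of $X$ at $x_0$.
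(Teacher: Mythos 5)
Your proof is correct, and the backward direction is essentially identical to the paper's. For the substantive forward direction, however, you take a genuinely different route. The paper works on the function-algebra side: from the retraction $\f$ it shows $\Pcal_{\delta_{\f(e)}}(\tilde f|_X) = (\tilde f|_X\circ\f)|_\G$ and that $\tilde f|_\G - (\tilde f|_X\circ\f)|_\G \in c_0(\G)$ for every $\tilde f \in C(\G\cup X)$, concluding $\Acal_\tau = c_0(\G)\oplus \mathrm{Im}\bigl(\Pcal_{\delta_{\f(e)}}\bigr)$ and then invoking Proposition~\ref{strucuture thm for measure comp.} to identify $\tau$ with $\tau_{\f(e)}$. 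You instead argue purely topologically: equivariance pins down $r(\g)=\g x_0$ with $x_0=r(e)$, and you match the convergence criterion of Lemma~\ref{lem: measured compactification}(ii) directly, using continuity of $r$ for one implication and a compactness--subsequence argument (with the isolated-point property of $\G$) for the other. Your route is more elementary and avoids the $C^*$-algebraic machinery; its one point of care is the final identification of topologies from their convergent sequences, which is legitimate here either by appealing to the uniqueness clause of Lemma~\ref{lem: measured compactification} (your $\tau$ satisfies (i) and (ii)) or by metrizability of $\overline{\G}$, as you note. What the paper's approach buys is uniformity: the same algebra-decomposition argument generalizes verbatim to arbitrary orbital compactifications (the UCP/weak-retraction characterization), whereas your argument exploits the special feature of the Dirac case that the retraction lands in $X$ rather than $\pr(X)$.
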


    \subsection*{Acknowledgments}
    YH, AH and TZ were supported by the funding from the European Research Council (ERC) under the European Union's Seventh Framework Programme (FP7-2007-2013) (Grant agreement
No. 101078193). MK was supported by the NSF Grant DMS-2155162.
    \section{Preliminaries}\label{section: Preliminaries}

        Let $\Gamma$ denote a countable discrete group. We denote by $\ell^\infty(\Gamma)$ the algebra of all complex-valued bounded functions from $\Gamma$ and by 
        $c_0(\G)$ the ideal of all functions ``vanishing at infinity'', namely, the space of all those functions $f\in \ell^\infty(\G)$ such that for every $\epsi>0$, the set $\{\gamma\in \G : \abs{f\ler{\gamma}}>\epsi\}$ is finite.
        
An action $\Gamma\acts X$ of $\G$ on a compact Hausdorff space $X$ by homeomorphisms
induces further canonical actions of $\G$. First, the action on the compact convex space $\pr(X)$ of all Borel probability measures on $X$ by affine homeomorphisms given by
$(g \cdot \nu)(E)=\nu(g^{-1}E)$ for $\nu\in\pr(X)$, $g\in\G$.
Second the action on $C(X)$ by *-automorphisms, defined by 
$(g \cdot f)(x)=f(gx)$ for $g\in\G$, $f\in C(X)$.

Given a measure $\nu\in \pr(X)$, the corresponding \emph{Poisson transform} $P_{\nu}\colon C(X)\lora \ell^{\infty}(\Gamma)$ is defined by $P_{\nu}(f)(\g)=\int_Xf\,d\g\nu$.
The transform $P_{\nu}$ is unital, positive and $\Gamma$-equivariant. 
Conversely, every unital positive $\Gamma$-equivariant map $\varphi\colon C\ler{X} \to \ell^\infty\ler{\Gamma}$ is a Poisson transform, namely, $\varphi=P_\nu$ for $\nu\in \pr(X)$ which is defined by $\nu(f)=[\varphi(f)](e)$ for every $f\in C\ler{X}$. 

The Poisson transform is multiplicative (hence a *-homomorphism) if and only if $\nu$ is a Dirac measure.

        By Gelfand duality, a unital $\G$-invariant $C^*$-subalgebra $\Acal$ of $\ell^\infty\ler{\Gamma}$ is of the form $C(Z)$ for some compact $\G$-space $Z$. Furthermore, if $c_0\ler{\Gamma} \subset \Acal$, then $\spec\ler{\Acal}$ contains a copy of $\Gamma$.

\subsection{Group compactifications}
As mentioned in the introduction, in this paper, we restrict our attention to those group compactifications whose boundary is minimal as a $\G$-space.

\begin{definition}[$\Gamma$-compactification] \label{def: Gamma-comp.}
Let $X$ be a compact minimal $\G$-space.
A topology $\tau$ on the set $\Gamma\cup X$ is called a {\bf $\Gamma$-compactification} if
\begin{enumerate}
\item  the space $(\Gamma\cup X,\tau)$ is compact and Hausdorff, $\Gamma$ is dense in $\Gamma\cup X$, and the induced topologies $\tau|_\Gamma$ and $\tau|_X$ are the original topologies on $\Gamma$ and $X$ respectively,
\item the actions $\Gamma\acts \Gamma$ (by left multiplication) and $\G\act X$ together define a continuous action of $\Gamma$ on the union $\Gamma\cup X$.
\end{enumerate}
\end{definition}

Our main focus will be on the case where $X$ is metrizable, and for the rest of the paper, \emph{unless otherwise stated, $X$ is assumed to be metrizable}. Note that in this case, any compactification $\Gamma\cup X$ is also metrizable since by countability of $\G$, one can choose a countable separating family of continuous functions on $\Gamma\cup X$.
Definition~\ref{def: Gamma-comp.} is, however, valid for general compact $\G$-spaces $X$, and indeed there are important examples of $\Gamma$-compactifications in which $X$ is non-metrizable (e.g. the Furstenberg boundary compactification).
The restriction to the metrizable case simplifies our constructions, however, all our results hold with modified arguments, replacing sequences with nets.

When a compact metrizable minimal $\G$-space $X$ is given, in order to define a concrete topology $\tau$ on the disjoint union $\G\cup X$ that turns it into a 
$\Gamma$-compactification, our task, informally speaking, 
 
is to describe the ``gluing of $\Gamma$ to $X$" in a ``legal" way: declaring which sequences in $\G$ converge to which points in $X$ in a way that 
(i) every sequence has a convergent subsequence (compactness);
(ii) if a sequence converges to some $x\in X$, then so do all of its subsequences (well-definedness); 
(iii) if $\gamma_n \underset{n\to\infty}{\goes}x$, then for every $g\in \Gamma$, $g\gamma_n \underset{n\to\infty}{\goes} gx$ (continuity of the $\Gamma$-action); and (iv) 
for every $x\in X$, there exists a sequence in $\Gamma$ which converges to it (density).

We present several such constructions in later sections.

\medskip

We often work in the dual setup of function algebras associated to $\G$-actions and their compactifications.
Let us record some basic facts that we will use later.
\begin{lemma}\label{a function vanishes on X iff in c0}
Let $\ler{\Gamma\cup X, \tau}$ be a $\Gamma$-compactification, and let $f\in C\ler{\Gamma\cup X}$. Then $f|_X \equiv0$ if and only if $f|_{\Gamma} \in c_0 \ler{\Gamma}$.
\end{lemma}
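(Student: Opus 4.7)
The statement is an iff, so I would prove each direction separately. Both directions will rely on the same key observation, which is really the only non-trivial ingredient: because the topology $\tau$ restricted to $\Gamma$ is the discrete topology, singletons in $\Gamma$ are $\tau$-open, hence a sequence of \emph{distinct} elements of $\Gamma$ cannot $\tau$-converge to any point of $\Gamma$. Consequently, if $(\g_n)\subset\G$ is injective and $\tau$-converges in $\G\cup X$, the limit must lie in $X$.

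For the forward direction ($f|_X\equiv 0 \Rightarrow f|_\G\in c_0(\G)$), I would argue by contradiction. Suppose $f|_\G\notin c_0(\G)$; then there exist $\epsi>0$ and an injective sequence $(\g_n)\subset\G$ with $|f(\g_n)|>\epsi$ for every $n$. By compactness of $(\G\cup X,\tau)$, a subsequence $(\g_{n_k})$ $\tau$-converges to some $y\in\G\cup X$. By the observation above, $y\in X$. Continuity of $f$ gives $f(y)=\lim_k f(\g_{n_k})$, so $|f(y)|\ge\epsi$, contradicting $f|_X\equiv 0$.

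For the converse, suppose $f|_\G\in c_0(\G)$, and let $x\in X$. Since $\G$ is dense in $\G\cup X$ and the compactification is metrizable, I would pick a sequence $(\g_n)\subset\G$ with $\g_n\to x$ in $\tau$. Because $x\notin\G$ and every finite subset $F\subset\G$ is $\tau$-closed (finite union of $\tau$-open-and-closed singletons), the complement of $F$ is a $\tau$-neighbourhood of $x$, so eventually $\g_n\notin F$. In other words, $(\g_n)$ leaves every finite set, which is exactly the condition for $f(\g_n)\to 0$ when $f|_\G\in c_0(\G)$. Continuity of $f$ then gives $f(x)=\lim_n f(\g_n)=0$, and as $x\in X$ was arbitrary, $f|_X\equiv 0$.

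The only step that requires care is ensuring, in the converse, that the approximating sequence $(\g_n)$ actually escapes to infinity in $\G$; this is not an obstacle so much as a small verification that the induced discrete topology on $\G$, combined with $x\notin\G$, rules out bounded approximants. No deeper machinery (Poisson transform, Gelfand duality) is needed here — the lemma is purely about the point-set topology of the compactification.
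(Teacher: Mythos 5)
Your proof is correct and follows essentially the same route as the paper: the same compactness-plus-discreteness contradiction for one direction, and density plus the $c_0$ condition along an approximating sequence for the other. The only cosmetic difference is that the paper picks an injective sequence converging to $x$ while you justify directly that any convergent approximating sequence eventually leaves every finite subset of $\Gamma$; both verifications amount to the same observation.
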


\begin{proof}
If $f|_\Gamma \in c_0\ler{\Gamma}$, then $f|_X \equiv0$ by density of $\Gamma$ in $\Gamma\cup X$, as for $x\in X$, there exists an injective sequence $(\gamma_n)$ in $\Gamma$ of which $x$ is an accumulation point. This sequence must satisfy that $f\ler{\gamma_n} \goesn  0$ by virtue of $f|_\G$ being in $c_0(\G)$, but then necessarily $f(x)=0$ by continuity of $f$.

Conversely, suppose that $f|_X \equiv0$, and for the sake of contradiction, suppose that $f|_\Gamma \notin c_0\ler{\Gamma}$. Then there exists $\epsi>0$ and an injective sequence $\ler{\gamma_n}_n$ in $\G$ such that $\abs{f\ler{\gamma_n}}>\epsi$ for every $n$. By compactness, $(\gamma_n)$ must have a convergent subsequence $\ler{\gamma_{n_k}}_k$, and by injectivity of $\ler{\gamma_n}$ and the fact that $\Gamma$ is discrete, $\ler{\gamma_{n_k}}_k$ converges to some $x\in X$. But then, by continuity of $f$, $f\ler{\gamma_{n_k}}\underset{k\to\infty}{\goes}  f\ler{x}=0$ in contradiction with that $\abs{f\ler{\gamma_{n_k}}}>\epsi$ for all $k$.
    \end{proof}

Denote by $i\colon c_0\ler{\Gamma}\to C\ler{\Gamma\cup X}$ the inclusion defined by $i\ler{f}|_\G=f$ and $i\ler{f}|_X\equiv0$.
Note that $i(f)$ is indeed in $C(\Gamma\cup X)$ as $\G$ is locally compact and hence open in any of its compactifications. Also denote by $\pi\colon C\ler{\Gamma\cup X} \to C\ler{X}$ the restriction $\pi\ler{f}=f|_X$.
Then we get that a $\Gamma$-compactification gives rise to a short exact sequence of commutative $\Gamma$-$C^*$-algebras as follows:
\begin{equation}\label{exact sequence}
\begin{tikzcd}
1 & {c_0 \left( \Gamma \right)} & {C\left(         \Gamma\cup X\right)} & {C\left(X\right)} & 1.
\arrow[from=1-1, to=1-2]
\arrow["i", from=1-2, to=1-3]\arrow["\pi", from=1-3, to=1-4]\arrow[from=1-4, to=1-5]
\end{tikzcd}   
\end{equation}

\begin{proposition} \label{prop: every compactification is a spec of a subalgebra}
For every $\Gamma$-compactification $\ler{\Gamma\cup X, \tau}$, there exists a (canonical) $\Gamma$-invariant $C^*$-subalgebra $\Acal_\tau \leq \ell^\infty \ler{\Gamma}$ such that ${\Acal_\tau} \cong C\ler{\Gamma \cup X , \tau}$.
\end{proposition}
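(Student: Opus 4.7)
The plan is to produce $\Acal_\tau$ explicitly as the image of the restriction map $C(\Gamma\cup X)\to \ell^\infty(\Gamma)$, $f\mapsto f|_\Gamma$, and to verify that this map is an isometric $\Gamma$-equivariant $*$-embedding whose image is a unital $\Gamma$-invariant $C^*$-subalgebra.

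First I would define $\rho\colon C(\Gamma\cup X,\tau)\to \ell^\infty(\Gamma)$ by $\rho(f)=f|_\Gamma$. This is well defined since $\Gamma\cup X$ is compact Hausdorff, so every $f\in C(\Gamma\cup X)$ is bounded; and it is clearly a unital $*$-homomorphism because the $*$-algebra structure on both sides is pointwise. Injectivity follows from the density of $\Gamma$ in $\Gamma\cup X$ (Definition~\ref{def: Gamma-comp.}\,(1)): if $f|_\Gamma\equiv 0$, then $f$ vanishes on a dense set, and by continuity $f\equiv 0$ on $\Gamma\cup X$. Since $\rho$ is an injective unital $*$-homomorphism between unital $C^*$-algebras, it is automatically isometric, so the image $\Acal_\tau:=\rho(C(\Gamma\cup X,\tau))$ is a norm-closed (hence $C^*$) unital subalgebra of $\ell^\infty(\Gamma)$, and the corestriction $\rho\colon C(\Gamma\cup X,\tau)\to \Acal_\tau$ is the desired isomorphism of $C^*$-algebras.

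Next I would verify $\Gamma$-equivariance and $\Gamma$-invariance of $\Acal_\tau$. Since the action $\Gamma\acts(\Gamma\cup X)$ is continuous (Definition~\ref{def: Gamma-comp.}\,(2)) and restricts to left multiplication on $\Gamma$, for $f\in C(\Gamma\cup X)$ and $g,\g\in\Gamma$ we have
\[
\rho(g\cdot f)(\g)=(g\cdot f)(\g)=f(g\g)=\rho(f)(g\g)=(g\cdot \rho(f))(\g),
\]
which shows $\rho$ is $\Gamma$-equivariant, and consequently $\Acal_\tau$ is a $\Gamma$-invariant subspace of $\ell^\infty(\Gamma)$.

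I don't expect a main obstacle here; the entire statement is a direct application of Gelfand duality combined with density of $\Gamma$ in $\Gamma\cup X$. The only subtlety worth mentioning is that canonicity of $\Acal_\tau$ is built into the construction: $\Acal_\tau$ consists precisely of those $b\in\ell^\infty(\Gamma)$ that admit a (necessarily unique) continuous extension to $(\Gamma\cup X,\tau)$, which makes the dependence on $\tau$ transparent and will be convenient in subsequent sections where different topologies $\tau$ are compared via their associated subalgebras.
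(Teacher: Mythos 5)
Your proposal is correct and follows essentially the same route as the paper: both define $\Acal_\tau$ as the image of the restriction $*$-homomorphism $f\mapsto f|_\Gamma$ and use density of $\Gamma$ in $(\Gamma\cup X,\tau)$ to get injectivity, hence an isomorphism onto the image. Your write-up merely spells out the standard details (automatic isometry of injective $*$-homomorphisms, closedness of the image, the equivariance computation) that the paper leaves implicit.
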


\begin{proof}
Consider the restriction $*$-homomorphism $r \colon C\ler{\Gamma\cup X , \tau}$, $f \mapsto f|_\Gamma$,
and let $\Acal_\tau := \Image\ler{r}$. The fact that $\Acal_\tau$ is a $\Gamma$-invariant sub-$C^*$-algebra of $\ell^\infty \ler{\Gamma}$ is clear, and the fact that $r$ is an isomorphism to its image follows from the fact that $\Gamma$ is dense in $\ler{\Gamma \cup X , \tau}$ and hence $r$ must be injective. 
\end{proof}

\section{Orbital compactifications} \label{section: oribital compactification}
Our main interest in this paper is a class of $\Gamma$-compactifications whose topology is defined by the convergence along orbits of measures on the boundary of the compactification. This construction was presented by Furstenberg in \cite{Fur71} in the context of boundary actions.

\begin{lemma}
\label{lem: measured compactification}
Let $X$ be a compact metrizable $\Gamma$-space, and let $\nu\in\pr(X)$. Then there is a unique topology $\tau_\nu$ on the union $\Gamma\cup X$ satisfying that
\begin{enumerate}
\item[{\rm (i)}\,]
the canonical inclusions of $\G$ and $X$ into $\Gamma\cup X$ are homeomorphisms onto their images, and
\item [{\rm (ii)}]
for every sequence $(\gamma_n)$ in $\G$ which leaves every finite set of $\G$ and every $x\in X$, $\gamma_n \goesn  x$ if and only if $\gamma_n\nu \goesn  \delta_x$ weak*.
\end{enumerate}
\end{lemma}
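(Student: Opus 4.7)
The plan is to construct $\tau_\nu$ explicitly via a neighborhood basis, verify properties (i) and (ii) directly, and deduce uniqueness from the observation that these two properties pin down all convergent sequences in $\Gamma \cup X$.

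For the construction, introduce the auxiliary map $\Phi \colon \Gamma \cup X \to \pr(X)$ given by $\Phi(\gamma) = \gamma\nu$ for $\gamma \in \Gamma$ and $\Phi(x) = \delta_x$ for $x \in X$, and recall that $\pr(X)$ is compact and metrizable in the weak* topology. As a neighborhood basis, take the singleton $\{\gamma\}$ at each $\gamma \in \Gamma$, and at each $x \in X$ the family
\[
N_{W, F}(x) := \Phi^{-1}(W) \setminus F,
\]
where $W$ ranges over weak* open neighborhoods of $\delta_x$ in $\pr(X)$ and $F$ over finite subsets of $\Gamma$. A direct intersection check --- intersecting the $W$'s and taking the union of the $F$'s for two neighborhoods of the same $x$, shrinking the weak* ball around $\delta_y$ when moving to a new base point $y \in X$, and using a singleton when $y \in \Gamma$ --- verifies the neighborhood-basis axioms. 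Let $\tau_\nu$ be the resulting topology.

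Property (i) is immediate: the singleton basis at each $\gamma$ gives $\Gamma$ the discrete subspace topology, and since $y \mapsto \delta_y$ is a weak* homeomorphism onto its image in $\pr(X)$, the traces $N_{W, F}(x) \cap X$ are precisely the standard neighborhoods of $x$ in $X$. Property (ii) follows by unpacking: for $(\gamma_n)$ leaving every finite subset of $\Gamma$, the statement $\gamma_n \to x$ in $\tau_\nu$ is equivalent to $\gamma_n \in N_{W, F}(x)$ eventually for every $W, F$, which translates to $\gamma_n\nu$ lying in every weak* neighborhood of $\delta_x$ eventually, i.e.\ $\gamma_n\nu \to \delta_x$ weak*.

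For uniqueness, suppose $\tau'$ is another topology on $\Gamma \cup X$ satisfying (i) and (ii). A case analysis over types of sequences in $\Gamma \cup X$ shows that $\tau'$ has exactly the same convergent sequences as $\tau_\nu$: sequences eventually in a finite subset of $\Gamma$ are forced to be eventually constant by the discreteness in (i); sequences in $\Gamma$ leaving every finite set are controlled by (ii); sequences eventually in $X$ are handled by (i); and mixed sequences decompose into these pieces, with a Hausdorff-separation argument (using compactness of $X$ applied to the $X$-subsequence) ruling out cross-convergence. The topology $\tau_\nu$ is first-countable --- a countable basis at $x$ is obtained by pairing a countable weak* basis of $\delta_x$ with an exhaustion of $\Gamma$ by finite subsets --- hence sequential, so $\tau_\nu$-closed sets coincide with $\tau_\nu$-sequentially-closed sets. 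Comparing with $\tau'$ via the matching of convergent sequences then forces $\tau' = \tau_\nu$.

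The principal obstacle is cleanly executing the uniqueness step, since (i) and (ii) only directly control sequential convergence. My approach is to exploit metrizability of $\pr(X)$, which makes $\tau_\nu$ itself metrizable (in particular first-countable and sequential), thereby reducing equality of topologies to equality of their convergent sequences --- an equality that is already enforced by (i) and (ii).
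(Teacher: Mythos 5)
Your existence half is fine: the neighbourhood basis $\{\gamma\}$ at group elements and $N_{W,F}(x)=\Phi^{-1}(W)\setminus F$ at boundary points generates exactly the topology the paper defines through its closed-set conditions (a set is closed iff its traces on $\Gamma$ and $X$ are closed and it absorbs all Dirac accumulation points of $(\gamma_n\nu)$ along escaping sequences of its group part), and your verification of (i) and (ii) for this topology is correct.

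The genuine gap is in the uniqueness step. From ``$\tau'$ and $\tau_\nu$ have the same convergent sequences'' together with ``$\tau_\nu$ is first countable, hence sequential'' you can only conclude one inclusion: a $\tau'$-closed set is $\tau'$-sequentially closed, hence $\tau_\nu$-sequentially closed, hence $\tau_\nu$-closed, so $\tau'\subseteq\tau_\nu$. To reverse the inclusion along these lines you would need the \emph{unknown} topology $\tau'$ to be sequential (or first countable), and nothing in hypotheses (i)--(ii) provides that; two distinct topologies can share all convergent sequences when only one of them is sequential, so ``matching of convergent sequences then forces $\tau'=\tau_\nu$'' is precisely the unproved point. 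A secondary problem is that even the matching of convergent sequences uses more than (i)--(ii): condition (ii) only governs escaping sequences in $\Gamma$ and only their convergence to points of $X$, and your ``Hausdorff-separation argument'' ruling out cross-convergence (a constant sequence or a sequence in $X$ converging to the wrong kind of limit, or an escaping sequence converging to a group element) presupposes $\tau'$ is Hausdorff, which is not a hypothesis. For comparison, the paper obtains $\tau'\subseteq\tau_\nu$ cleanly, with no extra assumptions, by checking that every $\tau'$-closed set satisfies the two closed-set conditions defining $\tau_\nu$ (applying (ii) to a subsequence realizing a Dirac accumulation point), and then handles the reverse inclusion by a sequential-closure argument in $\tau'$ --- i.e.\ it too ultimately leans on $\tau'$ being determined by such sequences. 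So a repair of your plan should prove $\tau'\subseteq\tau_\nu$ the paper's way, and for $\tau_\nu\subseteq\tau'$ either invoke the standing setting in which the lemma is used (the competing topologies are compactification topologies, hence Hausdorff and, with $\Gamma$ countable and $X$ metrizable, first countable --- under which your sequence extraction does go through) or argue directly that (i)--(ii) force each $N_{W,F}(x)$ to be a $\tau'$-neighbourhood of $x$; that is where the real work lies and where your current write-up is silent.
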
 

\begin{proof}
We define the topology $\tau_{\nu}$ through the closed sets. We declare that $S\subset \Gamma \cup X$ is closed if and only if
\begin{enumerate}
\item[A.] $S\cap \Gamma$ and $S\cap X$ are closed in $\Gamma$ and in $X$ respectively, and
\item[B.] whenever a sequence $(\gamma_n)$ in $S\cap\Gamma$ leaves every finite set of $\G$, the set $S\cap X$ contains
\[\{x\in X : \delta_x \text{ is a weak* accumulation point of the sequence } (\gamma_n\nu)\}.\]
\end{enumerate}
The collection of sets $S$ which satisfy these two conditions is clearly closed under arbitrary intersections, and contains $\emptyset$ and $\Gamma\cup X$. To see that it is closed under finite unions, let $S_1,S_2$ be two sets that satisfy our two conditions. Then for $S_1 \cup S_2$, condition A holds evidently. To see that B holds, let $(\gamma_n)$ be a sequence in $(S_1\cup S_2)\cap\G$ which leaves every finite set of $\G$ and such that $\delta_x$ is an accumulation point of $(\gamma_n\nu)$ for some $x\in X$. Without loss of generality, suppose that infinitely many elements in $(\gamma_n)$ are from $S_1$, and so form a subsequence $(\gamma_{n_k})_k$ in $S_1\cap\G$ such that $\delta_x$ is an accumulation point of $\ler{\gamma_{n_k}\nu}_k$, and we conclude $x \in S_1$ and hence $x\in S_1\cup S_2$. It follows that the topology $\tau_\nu$ is well defined.
   
To show uniqueness, let $\tau$ be a topology on $\G\cup X$ which satisfies the conditions (i) and (ii) in the statement. Let $S$ be a set closed with respect to $\tau$. It is straightforward to show that it must satisfy conditions A and B and therefore it must be closed in $\tau_{\nu}$.
Conversely, assume $S$ is a closed set of $\tau_{\nu}$, and let $(s_n)$ be an injective sequence in $S$ with $s_n\to y\in (\G\cup X, \tau)$. If $(s_n)$ has a subsequence $(\g_n)$ in $\G$, then by (i) we have $y\in X$, and by (ii) we have $\g_n\nu\to \delta_y$ weak*. Since $S$ is closed in $\tau_{\nu}$, condition B implies $y\in S$.
If $(s_n)$ has a subsequence $(x_n)$ in $X$, then by (i), $y\in X$ and $x_n\to y$ in $X$. By condition A, $S\cap X$ is closed in $X$, which implies $y\in X$. Hence, it follows that $S$ is closed in $\tau$.
\end{proof}
A useful feature of the topology $\tau_\nu$ defined above is the following canonical extension property.
\begin{lemma}\label{lem:ext}
In the setting of Lemma~\ref{lem: measured compactification}, given $f\in C(X)$, the function $\Tilde{f}\colon \Gamma\cup X \to \CC$ defined by $\Tilde{f}|_{X}=f$ and $\Tilde{f}|_{\G}=\Pcal_{\nu}f$ is a continuous extension of $f$.
\begin{proof}
Suppose $\ler{\gamma_n}$ is a sequence in $\Gamma$ and $\gamma_n\goes x\in X$. Then by the definition of $\tau_\nu$, we have $\gamma_n\nu \goes \delta_x$ and hence $\Tilde{f}\ler{\gamma_n} = \int_X f\,d\gamma_n\nu \goes f(x) = \Tilde{f}\ler{x}$.
\end{proof}
\end{lemma}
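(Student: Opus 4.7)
The plan is to verify continuity of $\tilde{f}$ pointwise at every point of $\Gamma\cup X$, using that the compactification is metrizable (hence first countable), so sequential continuity suffices.

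First I would dispose of the easy cases. At any point $\gamma\in\Gamma$, continuity of $\tilde{f}$ is automatic because $\Gamma$ is locally compact and hence open in the compactification, and its induced topology is discrete (this is condition (i) of Lemma~\ref{lem: measured compactification}); so $\{\gamma\}$ is an open neighborhood of $\gamma$ on which $\tilde{f}$ is constant. It therefore remains to establish continuity at each $x\in X$.

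Next, fix $x\in X$ and let $(s_n)$ be a sequence in $\Gamma\cup X$ with $s_n\to x$. Since any subsequence converges to $x$ as well, it suffices to check that every subsequence of $(\tilde{f}(s_n))$ has a further subsequence converging to $f(x)$. Passing to a subsequence, we may assume either that $s_n\in X$ for all $n$ or that $s_n\in\Gamma$ for all $n$. In the first case, $s_n\to x$ in $X$ by condition (i), so $\tilde{f}(s_n)=f(s_n)\to f(x)$ by continuity of $f$. In the second case, I claim the sequence $(s_n)$ leaves every finite subset of $\Gamma$: otherwise some $\gamma\in\Gamma$ would appear infinitely often, giving a constant subsequence converging to $\gamma\neq x$, contradicting Hausdorffness of $\tau_\nu$ (condition~(i) ensures $\Gamma\cap X=\emptyset$ in the compactification). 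Now condition (ii) of Lemma~\ref{lem: measured compactification} gives $s_n\nu \to \delta_x$ in the weak* topology on $\pr(X)$, and therefore
\[
\tilde{f}(s_n) \;=\; (P_\nu f)(s_n) \;=\; \int_X f\,d(s_n\nu) \;\longrightarrow\; \int_X f\,d\delta_x \;=\; f(x) \;=\; \tilde{f}(x),
\]
by the very definition of weak* convergence applied to the continuous function $f\in C(X)$.

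There is no real obstacle here; the only subtlety is the Hausdorff argument ruling out the trivial ``stuck in $\Gamma$'' case, which then allows invoking clause (ii) of the defining properties of $\tau_\nu$. The extension property for $C(X)\to C(\Gamma\cup X)$ then follows immediately, and in fact this construction yields an explicit splitting of the restriction map $\pi$ at the level of function spaces (as a positive unital $\Gamma$-equivariant map, not a $*$-homomorphism unless $\nu$ is a Dirac measure), which fits with the broader theme of the paper.
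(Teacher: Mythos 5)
Your main computation is exactly the paper's: for a sequence in $\Gamma$ converging to $x\in X$, clause (ii) gives $\gamma_n\nu\to\delta_x$ weak*, and testing weak* convergence against $f$ gives $\tilde f(\gamma_n)\to f(x)$; your extra case analysis (points of $\Gamma$, subsequences lying in $X$, constant subsequences) is also fine in substance. The genuine problem is the opening reduction. You justify passing to sequences by ``the compactification is metrizable, hence first countable,'' and you later invoke Hausdorffness — but Lemma~\ref{lem: measured compactification} does not assert that $(\Gamma\cup X,\tau_\nu)$ is a compactification; the paper stresses immediately after the present lemma that for general $\nu$ this space need not even be compact, and neither metrizability nor Hausdorffness of $\tau_\nu$ has been established at this point (Hausdorffness is part of what Theorem~\ref{criterion for compactness of taunu} later characterizes). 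The paper's remark about metrizability applies to $\Gamma$-compactifications, so as written both the first-countability claim and the uniqueness-of-limits argument are unavailable, and likewise the claim that $\Gamma$ is open ``in the compactification.''

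All of this is repairable from the explicit description of the $\tau_\nu$-closed sets in the proof of Lemma~\ref{lem: measured compactification} (conditions A and B there). That description shows at once that $X$ is closed and that singletons of $\Gamma$ are clopen, which is all you need in place of Hausdorffness to rule out a constant subsequence and to get continuity at points of $\Gamma$. Better still, you can bypass sequences entirely: for closed $C\subseteq\CC$, the set $S=\tilde f^{-1}(C)$ satisfies condition A because $f$ is continuous and $\Gamma$ is discrete, and satisfies condition B because $\pr(X)$ is compact metrizable, so if $\delta_x$ is a weak* accumulation point of $(\gamma_n\nu)$ with $\gamma_n\in S\cap\Gamma$, then $\gamma_{n_k}\nu\to\delta_x$ along a subsequence, whence $\tilde f(\gamma_{n_k})=\int_X f\,d\gamma_{n_k}\nu\to f(x)$ and $f(x)\in C$, so $x\in S\cap X$; hence $S$ is $\tau_\nu$-closed and $\tilde f$ is continuous. (The same computation shows every sequentially closed set is $\tau_\nu$-closed, i.e.\ $\tau_\nu$ is a sequential topology — that, rather than metrizability, is the correct justification if you prefer to keep your reduction to sequences.) With this replacement your argument is complete, and its core coincides with the paper's one-line proof, which treats only your main case and leaves the same reduction implicit.
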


Note however, in general,
$\ler{\Gamma\cup X , \tau_\nu}$ is not a compactification (not even necessarily compact). For instance, if $\nu$ is $\G$-invariant and non-atomic, 
then $\ler{\Gamma\cup X,\tau_\nu}$ is just the disjoint union of the topological spaces $\Gamma$ and $X$.

Our next goal is to characterize $\nu\in \Prob(X)$ for which $\tau_\nu$ is a $\G$-compactification.
Recall the following definition (see, for example, Section I-3 in~\cite{azencott2006espaces} or~Section V.2 in~\cite{Glas1976}). A measure $\nu$ on $X$ is said to be \emph{contractible} if for every $x\in X$, $\delta_x$ is in the weak* closure of the $\G$-orbit of $\nu$, or equivalently, if the Poisson transform $\Pcal_\nu$ is an isometry.

\begin{lemma}
\label{lem:criterion for compactness of taunu1}
$\G$ is dense in $(\Gamma\cup X,\tau_\nu)$ if and only if the Poisson transform $\Pcal_\nu\colon C\ler{X}\to \ell^\infty\ler{\Gamma}$ is an isometry. 
\end{lemma}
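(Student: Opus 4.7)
I will prove the two implications separately.

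For the direction density $\Rightarrow$ isometry: since $\Pcal_\nu$ is unital and positive it is automatically contractive, so only the reverse inequality $\|\Pcal_\nu(f)\|_\infty \geq \|f\|_\infty$ needs an argument. Given $f \in C(X)$, I pick $x_0 \in X$ with $|f(x_0)| = \|f\|_\infty$, and, using density, choose a sequence $(\gamma_n)$ in $\G$ leaving every finite set of $\G$ with $\gamma_n \to x_0$ in $\tau_\nu$. By condition (ii) of Lemma~\ref{lem: measured compactification} this means $\gamma_n \nu \to \delta_{x_0}$ weak*, so $\Pcal_\nu(f)(\gamma_n) = \int f\, d\gamma_n\nu \to f(x_0)$, giving the required lower bound.

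For the converse, fix $x_0 \in X$. Using a compatible metric $d$ on $X$, I define $h(x) := \mathrm{diam}(X,d) - d(x, x_0)$, a nonnegative continuous function attaining its unique maximum at $x_0$ with $\|h\|_\infty = h(x_0)$. The isometry hypothesis applied to $h$ yields $\sup_\gamma \int h\, d\gamma\nu = h(x_0)$, so I pick a sequence $(\gamma_n)$ in $\G$ with $\int h\, d\gamma_n \nu \to h(x_0)$. By weak* compactness and metrizability of $\pr(X)$, I extract a weak* convergent subsequence $\gamma_n \nu \to \mu^* \in \pr(X)$; then $\int h\, d\mu^* = h(x_0)$, and uniqueness of the maximum of $h$ at $x_0$ forces $\mu^* = \delta_{x_0}$. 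This shows that $\delta_{x_0}$ lies in the weak* closure of the orbit $\G \nu$.

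The main obstacle is then to arrange that $(\gamma_n)$ can be chosen to leave every finite subset of $\G$ --- a requirement of $\tau_\nu$ for convergence to a boundary point. If $(\gamma_n)$ has an unbounded subsequence we are finished. Otherwise $(\gamma_n)$ has a constant subsequence $\gamma_n \equiv \gamma$ with $\gamma \nu = \delta_{x_0}$, which forces $\nu = \delta_y$ to be a Dirac measure, where $y := \gamma^{-1} x_0$. In this Dirac case the topology $\tau_\nu$ is simply $\gamma_n \to x$ iff $\gamma_n y \to x$ in $X$, and minimality yields $\overline{\G y} = X$. A short case split then completes the proof: either $X$ is infinite, in which case it has no isolated points (a standard property of infinite minimal compact $\G$-spaces), so $x_0$ is the limit of distinct orbit points $\gamma_n y$ yielding an unbounded $(\gamma_n)$; or $X$ is finite, in which case $\mathrm{Stab}(y)$ is infinite and provides infinitely many $\gamma' \in \G$ with $\gamma' y = x_0$.
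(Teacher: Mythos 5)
Your proof is correct, and its skeleton is the same as the paper's: the forward direction is identical (contractivity plus a sequence $\gamma_n\to x_0$ at a peak point of $f$, using that $\gamma_n\nu\to\delta_{x_0}$), and the converse likewise rests on a function peaking uniquely at $x_0$ together with weak* compactness and metrizability of $\pr(X)$ to extract $\gamma_n\nu\to\delta_{x_0}$. The genuine difference is in how you finish the converse. The paper argues by contradiction from a point $x_0\notin\overline{\G}^{\,\tau_\nu}$ and simply asserts that $\gamma_n\nu\to\delta_{x_0}$ means $\gamma_n\to x_0$ in $\tau_\nu$, silently passing over the possibility that $(\gamma_n)$ does not leave every finite set --- the only case in which condition (ii) defining $\tau_\nu$ does not apply. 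You treat exactly this degenerate case: a bounded sequence forces $\g\nu=\delta_{x_0}$ for some $\g$, hence $\nu=\delta_y$ with $y=\g^{-1}x_0$, and your split into $X$ infinite (no isolated points) versus $X$ finite (infinite stabilizer coset) correctly produces an injective sequence converging to $x_0$, so your direct argument (every $x_0\in X$ is a $\tau_\nu$-limit of group elements) closes a small gap in the paper's proof. Two remarks: your case analysis quietly uses that $\G$ is infinite and that $X$ is minimal; both are standing assumptions in the paper, but minimality is not literally among the hypotheses of this lemma, and you could avoid it entirely by observing that in the Dirac case the isometry of $\Pcal_{\delta_y}$, i.e.\ $\sup_{\g}\abs{f(\g y)}=\|f\|_\infty$ for all $f\in C(X)$, already says that the orbit $\G y$ is dense in $X$, which is all your argument needs. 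Also, in the forward direction, both you and the paper pass from density to the existence of a sequence in $\G$ converging to the peak point; this is legitimate here (the set of such sequential limit points in $X$ is closed, by a diagonal argument using metrizability of $\pr(X)$), but it is the same unstated step as in the paper, so no objection.
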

\begin{proof}
Assume first that $\Gamma$ is dense in $\Gamma\cup X$. Since any Poisson transform is always a contraction, we only need to show that
$\|f\|_\infty\le\|\Pcal_\nu f\|_\infty$ for every $f\in C\ler{X}$. For this, let $f\in C\ler{X}$ and choose $x_0\in X$ such that $\|f\|_\infty=f(x_0)$. By the assumption, 
there exists a sequence $(\gamma_n)$ in $\G$ such that $\gamma_n \goesn  x_0$. This means, by the definition of $\tau_\nu$, that $\gamma_n\nu\goesn\delta_{x_0}$, and therefore,
\[
\Pcal_\nu f \ler{\gamma_n} = \int_X f\,d\gamma_n \nu \goesn \int_Xf\,d \delta_{x_0}= f\ler{x_0}=\|f\|_\infty ,
\]
which implies $\|f\|_\infty\le\|\Pcal_\nu f\|_\infty$.

Conversely, assume that $\Pcal_\nu$ is an isometry, and for the sake of contradiction, suppose that
$\Gamma$ is not dense in $\G\cup X$. 
Let $x_0\in X$ be such that $x_0\notin \overline{\G}^{\, \tau_\nu}$ and choose $f\in C(X)$ such that $0\le f(x) \lneq f(x_0)$ for all $x\in X\setminus \{x_0\}$. 
Since $\Pcal_\nu$ is an isometry, 
there exists a sequence $(\gamma_n)$ in $\G$ such that $\Pcal_\nu f(\gamma_n) =\int_X f\,d\gamma_n \nu \goesn  \|f\|_\infty= f(x_0)$. Since $X$ is compact and metrizable, it is in particular separable, so $\Prob\ler{X}$ is not only compact but also metrizable, and we may assume without loss of generality that $(\gamma_n \nu)$ converges to some $\nu_0\in \Prob\ler{X}$. Hence $\int_X f\,d\nu_0=f(x_0)$, which by the choice of $f$ implies that $\nu_0 = \delta_{x_0}$. That is, $\gamma_n \nu\goesn \delta_{x_0}$, which means $\gamma_n\goesn {x_0}$ in $\tau_\nu$, a contradiction. This completes the proof.
\end{proof}

\begin{lemma}
\label{lem:criterion for compactness of taunu2}
Denote by $\pi\colon \ell^\infty\ler{\Gamma}\to \ell^\infty\ler{\Gamma}/c_0\ler{\Gamma}$ the canonical quotient map. 
The space $\Gamma\cup X$ with the topology $\tau_\nu$ is compact if and only if 
the composition $\f := \pi \circ \Pcal_\nu :C\ler{X} \to \ell^\infty\ler{\Gamma}/c_0\ler{\Gamma}$ is a $*$-homomorphism. 
\end{lemma}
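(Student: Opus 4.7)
The approach is to translate both sides of the equivalence into statements about weak* limits of orbit-measures $(\g_n\nu)$ along sequences $(\g_n)\subset\G$ which leave every finite set. The opening observation is that, since $\Pcal_\nu$ is unital and positive and $\pi$ is a $*$-homomorphism, the composition $\f$ is automatically unital and positive; hence ``$\f$ is a $*$-homomorphism'' is equivalent to ``$\f$ is multiplicative'', i.e.
\[
\Pcal_\nu(fg)-\Pcal_\nu(f)\Pcal_\nu(g)\in c_0(\G)\qquad\text{for all }f,g\in C(X).
\]
Evaluated at $\g\in\G$, this difference is the covariance $\int fg\,d\g\nu-\int f\,d\g\nu\cdot\int g\,d\g\nu$, which, by a standard fact about states on $C(X)$, vanishes at a measure $\mu\in\Prob(X)$ for all pairs $f,g$ if and only if $\mu$ is multiplicative, equivalently $\mu=\delta_x$ for some $x\in X$. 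So the condition on $\f$ says precisely that along every injective sequence $(\g_n)\subset\G$, \emph{every} weak* accumulation point of $(\g_n\nu)$ is a Dirac measure.

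For the forward direction, suppose $(\G\cup X,\tau_\nu)$ is compact and, for contradiction, that $\f$ is not a $*$-homomorphism. Then one can extract $f,g\in C(X)$, $\epsi>0$, and an injective sequence $(\g_n)\subset\G$ witnessing $|\Pcal_\nu(fg)(\g_n)-\Pcal_\nu(f)(\g_n)\Pcal_\nu(g)(\g_n)|>\epsi$ for every $n$. By the assumed compactness, $(\g_n)$ has a $\tau_\nu$-convergent subsequence $(\g_{n_k})$. Its limit must lie in $X$: applying conditions A and B of Lemma~\ref{lem: measured compactification} to $S=X$ shows $X$ is $\tau_\nu$-closed, so $\G$ is open in $\tau_\nu$, and since $\tau_\nu|_\G$ is discrete, an injective sequence in $\G$ cannot converge in $\G$. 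Hence $\g_{n_k}\to x\in X$, which by condition~(ii) of Lemma~\ref{lem: measured compactification} means $\g_{n_k}\nu\to\delta_x$ weak*. Passing to the limit in the covariance forces $|f(x)g(x)-f(x)g(x)|\geq\epsi$, contradiction.

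For the converse, assume $\f$ is a $*$-homomorphism; we show $(\G\cup X,\tau_\nu)$ is sequentially compact (hence compact, by metrizability). After trivial reductions, it suffices to treat an injective sequence $(\g_n)\subset\G$. By weak* compactness of $\Prob(X)$, pass to a subsequence with $\g_{n_k}\nu\to\mu$ for some $\mu\in\Prob(X)$. For each $f,g\in C(X)$, the hypothesis gives $\Pcal_\nu(fg)-\Pcal_\nu(f)\Pcal_\nu(g)\in c_0(\G)$, so evaluating along the injective sequence $(\g_{n_k})$ and letting $k\to\infty$ yields $\int fg\,d\mu=\int f\,d\mu\cdot\int g\,d\mu$. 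Thus $\mu$ is a multiplicative state, hence $\mu=\delta_x$ for some $x\in X$, and so $\g_{n_k}\to x$ in $\tau_\nu$ by Lemma~\ref{lem: measured compactification}.

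I do not anticipate a serious obstacle; the only conceptual point is the translation under the quotient by $c_0(\G)$: multiplicativity at the level of $\ell^\infty(\G)/c_0(\G)$ is exactly ``asymptotic multiplicativity along escaping sequences'', which by the Dirac-measure characterization of multiplicative states on $C(X)$ matches precisely the compactness criterion built into $\tau_\nu$.
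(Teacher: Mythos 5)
Your proof is correct and follows essentially the same route as the paper's: the forward direction by contradiction, using compactness of $(\Gamma\cup X,\tau_\nu)$ to pass to a subsequence converging to some $x\in X$ (so that $\gamma_{n_k}\nu\to\delta_x$ and the covariance must vanish), and the converse via weak* compactness of $\Prob(X)$ together with the fact that multiplicative states on $C(X)$ are Dirac measures. The only cosmetic difference is your appeal to metrizability in the reduction ``sequentially compact $\Rightarrow$ compact'' (metrizability of $\tau_\nu$ is not yet available at that stage), but the reduction is valid anyway by a direct open-cover argument using that injective sequences in the discrete set $\Gamma$ can only accumulate in $X$, and the paper makes the same reduction just as tersely.
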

\begin{proof}
$\implies$ Assume that $\Gamma \cup X$ is compact; and we will show $\f$ is a $*$-homomorphism. Since both the maps $\pi$ and $\Pcal_\nu$ are $*$-linear, the same holds for $\f$, so it only remains to show that $\f$ is multiplicative.
For this, let $f,g \in C\ler{X}$ and towards contradiction suppose that $\f \ler{fg} \ne \f\ler{f} \f \ler{g}$, that is, $\big(\Pcal_\nu fg - \Pcal_\nu f\, \Pcal_\nu g\big)\notin c_0(\G)$.
Thus,
there exists some $\epsi>0$ and an injective sequence $(\gamma_n)$, such that
\[
\abs{\int_X fg \, d\gamma_n\nu - \int_X f \, d\gamma_n\nu\int_X g \, d\gamma_n\nu}> \epsi
\]
for all $n\in \N$. By compactness of $\G\cup X$, we may assume $\gamma_n\goesn x_0$ for some $x_0\in X$, that is, $\gamma_n\nu\goesn \delta_{x_0}$ weak*. Then
\[
\lim_n\int_X fg \, d\gamma_n\nu = fg(x_0) = f(x_0)g(x_0) = \lim_n \int_X f \, d\gamma_n\nu\int_X g \, d\gamma_n\nu,
\]
a contradiction.\\

\noindent
    $\impliedby$ Conversely, assume $\f$ is multiplicative. 
    Since $X$ is compact, it is enough to show that every sequence in $\Gamma$ has a convergent subsequence. 
    Let $(\gamma_n)$  be an injective sequence in $\G$ and
for every $f,g\in C\ler{X}$, we have
\[
0=\lim_n\left(P_\nu fg - P_\nu f P_\nu g\right)(\gamma_n)=\lim_n\left(\int_X fg \, d\gamma_n\nu - \int_X f \, d\gamma_n\nu\int_X g \, d\gamma_n\nu\right) .
\]
Since $\Prob\ler{X}$ is compact and metrizable, $(\gamma_n)$ has a subsequence $(\gamma_{n_k})$ such that $\gamma_{n_k}\nu \underset{k\to\infty}{\goes}  \nu_0$ for some $\nu_0 \in \Prob\ler{X}$. 
It follows that
\[
\int_X fg \, d\nu_0 - \int_X f \, d\nu_0\int_X g \, d\nu_0 = 0 
\]
for all $f,g\in C\ler{X}$, which implies $\nu_0 = \delta_{x_0}$ for some $x_0\in X$.
 Hence, by the definition of $\tau_\nu$, $\gamma_{n_k}\goes x_0\in X$, which completes the proof.
\end{proof}

Combining Lemmas~\ref{lem:criterion for compactness of taunu1} and \ref{lem:criterion for compactness of taunu2}, we get the following characterization of measures $\nu\in \Prob(X)$ for which $\tau_\nu$ is a $\G$-compactification.

\begin{theorem}
 \label{criterion for compactness of taunu}
Let $X$ be a compact metrizable $\Gamma$-space, and let $\nu\in\pr(X)$. Then $\ler{\Gamma\cup X , \tau_\nu}$ is a $\Gamma$-compactification if and only if the following two conditions hold:
\begin{enumerate}
\item[\rm (i)]
the Poisson transform $\Pcal_\nu\colon C\ler{X}\to \ell^\infty\ler{\Gamma}$ is an isometry, and 
\item[\rm (ii)]
the composition $\pi \circ \Pcal_\nu :C\ler{X} \to \ell^\infty\ler{\Gamma}/c_0\ler{\Gamma}$ is a $*$-homomorphism, where $\pi\colon \ell^\infty\ler{\Gamma}\to \ell^\infty\ler{\Gamma}/c_0\ler{\Gamma}$ is the canonical quotient map. 
\end{enumerate}
In this case, we say $\ler{\Gamma\cup X , \tau_\nu}$ is an orbital compactification (defined by $\nu$).
\end{theorem}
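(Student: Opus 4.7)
The plan is to combine the two preceding lemmas with routine verifications of the remaining axioms of Definition~\ref{def: Gamma-comp.}. In the forward direction, if $(\Gamma\cup X, \tau_\nu)$ is a $\Gamma$-compactification, then $\Gamma$ is dense and $\Gamma\cup X$ is compact, so Lemmas~\ref{lem:criterion for compactness of taunu1} and~\ref{lem:criterion for compactness of taunu2} immediately yield conditions (i) and (ii) respectively. Conversely, assuming (i) and (ii), the same two lemmas deliver density of $\Gamma$ in $\Gamma\cup X$ and compactness of $\Gamma\cup X$, while the induced topologies on $\Gamma$ and $X$ agree with the original ones by construction of $\tau_\nu$ (Lemma~\ref{lem: measured compactification}).

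What remains is to verify Hausdorffness and continuity of the $\Gamma$-action. For Hausdorffness, I would first observe that for every $\gamma\in\Gamma$, the singleton $\{\gamma\}$ is both open and closed in $\tau_\nu$: no sequence in $\{\gamma\}\cap\Gamma$ leaves every finite set of $\Gamma$, so condition B in the construction of $\tau_\nu$ is vacuous for $\{\gamma\}$ and for its complement, while condition A is trivially met in both cases. This separates points of $\Gamma$ from each other and from points of $X$ by disjoint clopen sets. To separate distinct $x,y\in X$, pick $f\in C(X)$ with $f(x)\ne f(y)$ and apply the extension property of Lemma~\ref{lem:ext} to produce a continuous function on $\Gamma\cup X$ separating $x$ from $y$. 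For continuity of the action, since $\Gamma$ is discrete in $\tau_\nu$ and the actions on $\Gamma$ and $X$ individually are continuous by assumption, it suffices to check that for each fixed $g\in\Gamma$, the map $y\mapsto gy$ sends convergent sequences to convergent sequences across the two pieces; the only nontrivial case is a sequence $\gamma_n\to x\in X$, where by definition of $\tau_\nu$ we have $\gamma_n\nu\to\delta_x$ weak*, hence $(g\gamma_n)\nu=g(\gamma_n\nu)\to g\delta_x=\delta_{gx}$, so $g\gamma_n\to gx$ as required.

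The substance of the theorem lies in Lemmas~\ref{lem:criterion for compactness of taunu1} and~\ref{lem:criterion for compactness of taunu2}; here my role is essentially to bundle their conclusions together and confirm that density plus compactness, supplemented by the extension property of Lemma~\ref{lem:ext}, suffice to realise $\tau_\nu$ as a $\Gamma$-compactification in the sense of Definition~\ref{def: Gamma-comp.}. I do not anticipate a genuine obstacle beyond carefully observing that $\Gamma$ sits as a discrete open subspace of $(\Gamma\cup X,\tau_\nu)$, which is what renders both the Hausdorff separation and the continuity of the action routine.
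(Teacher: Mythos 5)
Your proposal is correct and follows essentially the same route as the paper, which likewise obtains the theorem by combining Lemmas~\ref{lem:criterion for compactness of taunu1} and~\ref{lem:criterion for compactness of taunu2} (density $\Leftrightarrow$ isometry, compactness $\Leftrightarrow$ $*$-homomorphism). The only difference is that you spell out the remaining routine verifications (clopenness of singletons in $\Gamma$, Hausdorff separation via Lemma~\ref{lem:ext}, continuity of each translation $y\mapsto gy$) that the paper leaves implicit, and these checks are sound.
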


The above two conditions in the statement of Theorem~\ref{criterion for compactness of taunu} obviously hold for Dirac measures.

\begin{corollary}
Let $X$ be a compact metrizable $\Gamma$-space, $x_0\in X$ and let $\nu=\delta_{x_0}$. Then $\ler{\Gamma\cup X , \tau_\nu}$ is a $\Gamma$-compactification, which we call the point-orbital compactification (defined by the point $x_0$).
\end{corollary}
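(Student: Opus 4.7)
The plan is to verify the two conditions (i) and (ii) of Theorem~\ref{criterion for compactness of taunu} for the measure $\nu=\delta_{x_0}$, and then conclude immediately from that theorem. Both conditions will turn out to be essentially automatic; the only ingredient beyond direct computation is the minimality of $X$, which is built into the notion of a $\Gamma$-compactification (Definition~\ref{def: Gamma-comp.}).

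First I would unpack the Poisson transform. For any $f\in C(X)$ and $\gamma\in\Gamma$, since $\gamma\delta_{x_0}=\delta_{\gamma x_0}$, we have
\[
P_{\delta_{x_0}}(f)(\gamma)=\int_X f\,d(\gamma\delta_{x_0})=f(\gamma x_0).
\]
So $P_{\delta_{x_0}}$ is simply the pullback along the orbit map $\gamma\mapsto \gamma x_0$.

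Next I would check condition (i), namely that $P_{\delta_{x_0}}$ is an isometry. Since $X$ is a minimal $\Gamma$-space, the orbit $\Gamma x_0$ is dense in $X$, so for any $f\in C(X)$,
\[
\|P_{\delta_{x_0}}f\|_\infty=\sup_{\gamma\in\Gamma}|f(\gamma x_0)|=\sup_{x\in X}|f(x)|=\|f\|_\infty,
\]
using continuity of $f$ and density of $\Gamma x_0$.

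For condition (ii), I would observe that in fact $P_{\delta_{x_0}}$ itself is already a $*$-homomorphism (not just its composition with $\pi$): for $f,g\in C(X)$ and $\gamma\in\Gamma$,
\[
P_{\delta_{x_0}}(fg)(\gamma)=(fg)(\gamma x_0)=f(\gamma x_0)g(\gamma x_0)=\bigl(P_{\delta_{x_0}}f\cdot P_{\delta_{x_0}}g\bigr)(\gamma),
\]
and of course $P_{\delta_{x_0}}(\overline{f})=\overline{P_{\delta_{x_0}}f}$. Composing with the quotient $\pi\colon\ell^\infty(\Gamma)\to\ell^\infty(\Gamma)/c_0(\Gamma)$ preserves this, so $\pi\circ P_{\delta_{x_0}}$ is a $*$-homomorphism. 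Both conditions of Theorem~\ref{criterion for compactness of taunu} are thus satisfied, and the corollary follows. There is no real obstacle in this argument; the only subtlety worth flagging is the implicit use of minimality of $X$ for the isometry property, which is why the point-orbital construction works uniformly for any choice of base point $x_0\in X$.
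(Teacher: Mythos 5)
Your proof is correct and follows exactly the paper's route: the corollary is deduced from Theorem~\ref{criterion for compactness of taunu}, whose two conditions the paper simply declares ``obviously hold for Dirac measures,'' and your computation (Poisson transform as pullback along the orbit map, isometry via minimality, multiplicativity of evaluation at $\gamma x_0$) just spells out that observation. Your remark that minimality of $X$ is the only nontrivial input is accurate, since minimality is part of the paper's standing assumptions.
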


We next give a characterization of orbital compactifications in terms of their corresponding function algebras; recall from Proposition~\ref{prop: every compactification is a spec of a subalgebra} that given a $\G$-compactification $\ler{\Gamma\cup X, \tau}$, we define $\Acal_\tau := \{f|_\Gamma : f\in C\ler{\Gamma\cup X , \tau}\}\subset \ell^\infty(\G)$.
\begin{proposition}\label{strucuture thm for measure comp.}
Let $\ler{\Gamma\cup X, \tau}$ be a $\G$-compactification. Then $\tau=\tau_\nu$ is orbital for some $\nu\in\pr(X)$ if and only if $\Acal_\tau = {\rm Im}\ler{\Pcal_\nu}\oplus c_0\ler{\Gamma}$.
\end{proposition}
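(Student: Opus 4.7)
For the forward direction, assume $\tau = \tau_\nu$. By Lemma~\ref{lem:ext}, each $f \in C(X)$ admits a canonical continuous extension $\tilde f$ with $\tilde f|_X = f$ and $\tilde f|_\Gamma = \Pcal_\nu f$, hence $\operatorname{Im}(\Pcal_\nu) \subseteq \Acal_\tau$; combined with $c_0(\Gamma) \subseteq \Acal_\tau$ from the exact sequence \eqref{exact sequence}, this yields $\operatorname{Im}(\Pcal_\nu) + c_0(\Gamma) \subseteq \Acal_\tau$. The sum is direct, since $\Pcal_\nu f \in c_0(\Gamma)$ forces---via Lemma~\ref{a function vanishes on X iff in c0} applied to $\tilde f$---that $f = 0$. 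For the reverse containment, given $h = F|_\Gamma \in \Acal_\tau$, set $f := F|_X$ and let $\tilde f$ be its canonical extension: then $F - \tilde f$ vanishes on $X$, so $(F - \tilde f)|_\Gamma \in c_0(\Gamma)$ by Lemma~\ref{a function vanishes on X iff in c0}, decomposing $h = \Pcal_\nu f + c$.

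For the backward direction, assume $\Acal_\tau = \operatorname{Im}(\Pcal_\nu) \oplus c_0(\Gamma)$. Each $\Pcal_\nu f$ then has a unique continuous extension $F_f \in C(\Gamma \cup X, \tau)$, and I define $\psi \colon C(X) \to C(X)$ by $\psi(f) = F_f|_X$. The map $\psi$ is unital, positive, $\Gamma$-equivariant, and linear; the direct sum, combined with $\Acal_\tau / c_0(\Gamma) \cong C(X)$ (from \eqref{exact sequence}), realizes $\psi$ as $\Pcal_\nu$ composed with the quotient isomorphism, so $\psi$ is bijective. Exploiting that $\Acal_\tau$ is a $*$-subalgebra of $\ell^\infty(\Gamma)$, the product $\Pcal_\nu f \cdot \Pcal_\nu g$ decomposes uniquely as $\Pcal_\nu h + c$; uniqueness of continuous extensions on the dense set $\Gamma$ forces $F_f F_g = F_h + i(c)$, and restricting to $X$ (where $i(c)$ vanishes by Lemma~\ref{a function vanishes on X iff in c0}) gives $\psi(f)\psi(g) = \psi(h)$. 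To conclude that $h = fg$---equivalently, that $\psi$ is a $*$-homomorphism---I would pass to the dual: $\psi^* \colon \Prob(X) \to \Prob(X)$ is continuous, affine, and injective, and by combining the direct sum structure with minimality of $X$ and $\Gamma$-equivariance one shows it is also surjective. An affine bijection of $\Prob(X)$ preserves extreme points, so $\psi^*(\delta_y) = \delta_{\phi(y)}$ for some $\Gamma$-equivariant continuous bijection $\phi \colon X \to X$; minimality upgrades $\phi$ to a homeomorphism, so $\psi(f) = f \circ \phi$ is a $*$-isomorphism. Setting $\nu' := (\phi^{-1})_* \nu$, one verifies $\Pcal_{\nu'} = \Pcal_\nu \circ \psi^{-1}$, and the defining convergence of $\tau_{\nu'}$ matches that of $\tau$: if $\gamma_n \to x$ in $\tau$, then $\gamma_n \nu \to \delta_{\phi(x)}$, so $\gamma_n \nu' = (\phi^{-1})_*(\gamma_n \nu) \to \delta_x$. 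Hence $\tau = \tau_{\nu'}$.

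The main obstacle is proving the $*$-homomorphism property of $\psi$---equivalently, that $\psi^*$ restricts to a bijection of $\Prob(X)$, or that $\psi^{-1}$ is positive. The forward direction is essentially bookkeeping with the extension lemma, but this last step in the backward direction is the heart of the argument, and I expect it to rely on the rigidity imposed by the algebraic direct sum decomposition interacting with the minimality of the $\Gamma$-action on $X$.
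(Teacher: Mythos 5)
Your forward direction is correct and is essentially the paper's argument: the same decomposition via the canonical extension of Lemma~\ref{lem:ext}, with the paper's $\varepsilon$--compactness step replaced by a direct appeal to Lemma~\ref{a function vanishes on X iff in c0} applied to $F-\tilde f$, which is a legitimate streamlining. The problem is the backward direction, whose crucial step you leave open and which, as proposed, is actually false. The claim that $\psi^*$ restricts to a bijection of $\Prob(X)$, hence that $\psi$ is a $*$-isomorphism of the form $f\mapsto f\circ\phi$ for an equivariant homeomorphism $\phi$, fails in the following example. Let $\Gamma=\Z$ act on the two-point space $X=\{a,b\}$ by the swap, let $\tau=\tau_{a}$ be the point-orbital compactification through $a$, and let $\nu=s\delta_a+(1-s)\delta_b$ with $s\notin\{0,\tfrac12,1\}$. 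Then $\Pcal_\nu f$ equals $sf(a)+(1-s)f(b)$ on even integers and $(1-s)f(a)+sf(b)$ on odd integers, so ${\rm Im}(\Pcal_\nu)$ is exactly the space of functions depending only on parity, which coincides with ${\rm Im}(\Pcal_{\delta_a})$; hence $\Acal_\tau={\rm Im}(\Pcal_\nu)\oplus c_0(\Z)$. But here your $\psi$ is the stochastic matrix $\bigl(\begin{smallmatrix} s & 1-s\\ 1-s & s\end{smallmatrix}\bigr)$: unital, positive, equivariant and bijective, yet not multiplicative; $\psi^*$ maps $\Prob(X)\cong[0,1]$ onto the proper subinterval $[\min(s,1-s),\max(s,1-s)]$, $\psi^*(\delta_a)=\nu$ is not an extreme point, and $\psi^{-1}$ is not positive. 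Consequently there is no homeomorphism $\phi$ with $\psi(f)=f\circ\phi$, and the measure that actually witnesses orbitality here, namely $\delta_a$, is not of the form $(\phi^{-1})_*\nu$ for any equivariant homeomorphism $\phi$ (the only ones are the identity and the swap). So the step you yourself single out as the heart of the argument is not a missing verification but a step that would fail.

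The concern you are circling is nonetheless genuine: from $\Acal_\tau={\rm Im}(\Pcal_\nu)\oplus c_0(\Gamma)$ alone one cannot conclude that the continuous extension of $\Pcal_\nu f$ restricts to $f$ on $X$, and in the example above one even has $\tau\neq\tau_\nu$ for the given $\nu$ (no translate $\gamma\nu$ converges weak* to a Dirac measure there). The paper's backward argument proceeds differently from yours: it decomposes an arbitrary $\tilde f\in C(\Gamma\cup X)$ as $\tilde f_1+\tilde\phi$ with $\tilde f_1|_\Gamma=\Pcal_\nu(h)$ and $\tilde\phi|_X=0$ and reads off the convergence criterion, which amounts to identifying $h$ with $\tilde f|_X$ --- precisely the identification you rightly decline to assume. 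So your diagnosis of where the difficulty sits is accurate, but the automorphism device does not repair it: a correct proof of the backward implication must produce the witnessing measure by a mechanism other than transporting the given $\nu$ by an equivariant homeomorphism, and your write-up does not supply one.
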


\begin{proof}
Assume $\tau=\tau_\nu$ for some $\nu\in\pr(X)$. 
Suppose $f\in C(X)$ and $\Pcal_{\nu}f \in c_0\ler{\Gamma}$. Let $\Tilde{f}\colon \Gamma\cup X \to \CC$ be the continuous extension as in Lemma~\ref{lem:ext}. Since $\Tilde{f}|_\Gamma = \Pcal_{\nu}\ler{f}\in c_0\ler{\Gamma}$ it follows by Proposition \ref{a function vanishes on X iff in c0} that $f=\Tilde{f}|_X = 0$.
We conclude ${\rm Im}\ler{\Pcal_\nu}\cap c_0\ler{\Gamma} = \{0\}$.

Next, let $f\in C(X)$ and $\phi\in c_0(\G)$, we show $\Pcal_\nu (f)+ \phi \in \Acal_\tau$.
Let $\tilde f\in C(\G\cup X)$ be the extension of $f$ as in Lemma~\ref{lem:ext}, and also extend $\phi$ to the function $\tilde \phi\in C(\G\cup X)$ by defining $\tilde \phi|_X = 0$.
Then, $\Pcal_\nu (f)+ \phi= (\tilde f+\tilde \phi)|_\G \in \Acal_\tau$.

Conversely,  
let $\tilde f\in C\ler{\Gamma \cup X}$ and we show 
$\tilde f|_\Gamma- \Pcal_\nu (\tilde f|_X) \in c_0 \ler{\Gamma}$,
which then implies $\tilde f\in {\rm Im}\ler{\Pcal_\nu}\oplus c_0\ler{\Gamma}$.
For the sake of contradiction, assume otherwise, that is, 
there exist $\epsi>0$ and an injective sequence $(\gamma_n) \subset \Gamma$, such that
\[
\abs{\tilde f\ler{\gamma_n} - \Pcal_\nu \ler{\tilde f|_X}\ler{\gamma_n}}>\epsi
\]
for all $n\in\N$.
By compactness of $(\Gamma\cup X,\tau_\nu)$, we may further assume that $\gamma_{n}\to  x \in X$, that is, $\gamma_{n}\nu \to \delta_x$ weak*. 
Then, by continuity, $\tilde f\ler{\gamma_{n}} \to \tilde f\ler{x}$, and 
\[ 
\Pcal_\nu \ler{\tilde f|_X}\ler{\gamma_{n}} =\int_X \tilde f|_X \,d \gamma_{n}\nu \goesn  \tilde f\ler{x} .
\]
Combining the above, we get a contradiction. This completes the proof of the forward implication of the first assertion.

For the converse implication, assume $\ler{\Gamma\cup X, \tau}$ is a $\G$-compactification such that $\Acal_\tau = {\rm Im}\ler{\Pcal_\nu}\oplus c_0\ler{\Gamma}$ for some $\nu\in\pr(X)$.
Let $\Tilde{f}\in C(\G\cup X)$. By the assumption, $\Tilde{f}|_\G=\Pcal_{\nu}(\Tilde{f}_1)+\phi$, where $\Tilde{f}_1\in C(\G\cup X)$ and $\phi\in c_0(\G)$. Extend $\phi$ to $\Tilde{\phi}\in C(\G\cup X)$ by setting $\Tilde{\phi}|_X=0$. Then 
$\Tilde{f}|_\G=\Tilde{f}_1|_\G+\Tilde{\phi}|_\G$, and so by the density of $\G$, we get $\Tilde{f}=\Tilde{f}_1+\Tilde{\phi}$.

Let $(\gamma_n)$ be an injective sequence in $\G$. Then $\Tilde{\phi}(\gamma_n)\goes 0$, and therefore if $(\gamma_n)$ converges in $\ler{\Gamma\cup X, \tau}$, then
\[
\lim_n \Tilde{f}(\gamma_n)= \lim_n \Tilde{f}_1(\gamma_n) =\lim_n \Pcal_{\nu}(\Tilde{f}_1)(\gamma_n) =
\lim_n \int_X f\,d\gamma_n\nu.\]
Since this holds for every $f\in C(\G\cup X)$, it follows $\gamma_n\goes x\in X$ if and only if $\gamma_n\nu\goes \delta_x$ weak*. This implies $\tau=\tau_\nu$.
\end{proof}

\subsection{Examples}
We proceed with several examples of $\G$-compactifications. First, we provide two elementary and natural  examples of orbital $\G$-compactifications, one of which is point-orbital, and the other not. 

\begin{example} \label{example: Dirac comp. for ll(Z)}
Consider the lamplighter group on $\ZZ$,
\[
\Gamma = \ZZ\ltimes \oplus_\ZZ (\ZZ/2\ZZ) \cong (\ZZ/2\ZZ) \wr \ZZ \cong \innprod{a,t \mid a^2, \ler{at^nat^{-n}}^2 \text{ for all } n\in\ZZ},
\]
acting on $X:=\prod_{\ZZ} \ZZ/2\ZZ$, the space of infinite lamp configurations, equipped with the standard product topology, that makes it a compact space. 
The action is defined as follows. For an element $\ler{p,C}\in\Gamma$ (where $p$ is the position of the lamplighter and $C$ is a finite lamps configuration), and a point $\Bar{C}\in X$ (a finite or infinite configuration), the the resulted element $\ler{p,C}.\Bar{C}$ is defined by translating $\Bar{C}$ by $p$ and then summing (xoring, if one prefers) with $C$. 

Since every element $\gamma = \ler{p,C} \in \Gamma$ possesses a finite configuration, we can use that to construct a $\Gamma$-compactification by declaring
\[
\gamma_n = \ler{p_n,C_n}\goesn  \Bar{C} \iff
C_n \goesn  \Bar{C} \text{ (in X)}.
\]
Now, one can easily verify that this is indeed a $\Gamma$-compactification, and it is the point-orbital compactification through the empty configuration $\ler{...,0,0,0,0,...}\in X$. One can easily check manually that the compactification is indeed point-orbital, or by using Theorem \ref{non-Dirac iff Brouwer} (consider the retraction that fixes the boundary and ``forgets" the pointer on the group, i.e., $\ler{p,C}\mapsto C$).
\end{example}

\begin{example} \label{gromov not Dirac}
Let $\FF_2$ be the free group on two generators, let $\partial\FF_2$ be its Gromov boundary, and denote by $\tau_G$ the usual topology on $\FF_2 \cup \partial\FF_2$ of the Gromov-compactification (see \cite{Gro1987}). 
Then $\ler{\FF_2\cup\partial \FF_2 , \tau_G}$ is not a point-orbital compactification.
To see this, suppose on the contrary that $\tau_G = \tau_{x_0}$ is a point-orbital compactification for some $x_0 \in \partial \FF_2$. We split the proof into cases and show a contradiction in each of them.
First, consider the case that $x_0$ has eventually only one letter from $\lers{a^{\pm 1},b^{\pm 1}}$, that is $x_0 = w_0\alpha^\infty$, for some finite word $w_0\in \FF_2$ and letter $\alpha\in\lers{a^{\pm 1},b^{\pm 1}}$. Assume without loss of generality that $x_0 = w_0 a^\infty$. In this case, the contradiction comes from the sequence $(w_n)$ in $\FF_2$ given by $w_n := w_0a^{-n}$ for every $n$. It is clear that we should have that $w_n \to  w_0a^{-\infty}$ in $\tau_G$. But $w_0a^{-n}a^{\infty}= w_0a^\infty$, hence $w_nx_0 = w_0a^{-n}a^{\infty} \to  w_0a^\infty \ne w_0a^{-\infty}$.

Now, assume that $\tau_G = \tau_{x_0}$, a point-orbital compactification such that $x_0$ does not eventually contain only one letter. Assume, without loss of generality, that the letter $a$ appears infinitely many times in $x_0$. Consider the sequence $(w_n)$ in $\FF_2$ which is defined by taking the inverse of the prefix of $x_0$ that goes up to right before the $n$-th power of $a$ in $x_0$. For instance, if
\[
x_0 = a^2b^{-1}a^{-3} b^5 a b a^7...,
\]
then
\[
\begin{split}
 w_1 &= e \ \text{(the empty word)} \\
w_2 &= ba^{-2} \\
w_3 &= b^{-5} a^3 b a^{-2} \\
w_4 &= b^{-1} a^{-1} b^{-5} a^3 b a^{-2}\\
&\ \ \vdots
\end{split}
\]
For every letter $\alpha\in\lers{a^{\pm 1}, b^{\pm 1}}$, denote by $\lesq{\alpha} \subset \partial \FF_2$ the closed set of infinite words which start with $\alpha$. By construction, the first letter of $w_n$ is $b$ or $b^{-1}$ for every $n\geq2$. Hence, every partial limit of $(w_n)$ must lie in $\lesq{b} \cup \lesq{b^{-1}}$. However, $w_n  x_0 \in \lesq{a} \cup \lesq{a^{-1}}$ and hence, every partial limit of $(w_nx_0)$ is in $\lesq{a} \cup \lesq{a^{-1}}$. Nevertheless, we supposed that $\tau_G = \tau_{x_0}$, and therefore a subsequence $\ler{w_{n_k}}_k$ would converge to some $x\in\partial \FF_2$ if and only if $w_{n_k}x_0 \underset{k\to\infty}{\goes}  x$, in contradiction with the fact that $(w_n)$ and $\ler{w_nx_0}$ do not share any partial limit.
This proves the claim.
\end{example}

In Theorem \ref{gromov not Dirac} we extend the above ``hands-on'' argument to a more general framework to verify that the Gromov compactifications of hyperbolic groups are not point-orbital. 

\subsubsection*{Non-orbital $\Gamma$-compactifications}
At this point, it is important to emphasize that not every $\Gamma$-compactification is orbital. 
\begin{example} \label{example: two points compactification, non-trivial}
Consider $\ZZ$ acting on the space $X=\lers{a,b}$ of two points transitively ($1.a=b$). One can easily check that the following conditions give rise to a unique $\Z$-compactification, $\tau$,
\begin{itemize}
\item $2,4,6,8,...\goes  a$
\item $-2,-4,-6,-8,... \goes  b$
\item $1,3,5,7,... \goes  b$
\item $-1,-3,-5,-7,...\goes  a$
\end{itemize}
Then $\tau$ is not point-orbital, since if, for instance, $\tau=\tau_a$, we know that the even numbers act trivially, but then
\[
\lim_{n\to\infty} -2n = b \ne a = \lim_{n\to\infty} -2n.a.
\]
Hence, the compactification here is not orbital as well.
\end{example} 

\begin{example} \label{example: infinite dihedral grp}
Consider the infinite dihedral group $D_\infty= \innprod{\rho,\sigma}$ acting on $\Z$ where $\rho$ is the right shift and $\sigma$ is the reflection. Consider its action on the two-point space $\lers{a,b}$, in which $\rho$ acts trivially and $\sigma$ permutes $a$ and $b$. To obtain a $D_\infty$-compactification here, we define a topology $\tau$ as follows. Consider the action of $D_\infty$ on $\ZZ$ described above. We say that a sequence $\ler{\gamma_n}$ in $D_\infty$ converges to $a$ if $\gamma_n.0\goes\infty$ and converges to $b$ if  $\gamma_n.0\goes  -\infty$. If $\tau=\tau_a$, then since $\rho$ fixes both $a$ and $b$, we would get that $\rho^{-n}$ converges to $a$. Similarly, if $\tau=\tau_b$, then $\rho^n$ would converge to $b$. In both cases, we get a contradiction with the definition of $\tau$. By the same argument as in Example \ref{example: two points compactification, non-trivial}, $\tau$ is also not an orbital compactification.  
\end{example}

\section{No-retraction theorems}

Now we restate and prove Theorem \ref{non-Dirac iff Brouwer} from the introduction.

\begin{theorem} \label{one pt iff extension}
Let $X$ be a metrizable $\G$-space and $(\Gamma \cup X,\tau)$ be a $\Gamma$-compactification. Then there exists a $\Gamma$-equivariant retraction $\f\colon \Gamma \cup X \to X$ if and only if $\tau = \tau_{x_0}$ is point-orbital for some $x_0 \in X$.
In this case, we have $x_0=\f\ler{e}$. \end{theorem}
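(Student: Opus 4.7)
The plan is to exploit Lemma~\ref{lem: measured compactification} (uniqueness of $\tau_{\nu}$) combined with the observation that a $\Gamma$-equivariant map from $\Gamma\cup X$ to $X$ is completely determined by its value at the identity.

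\textbf{Easy direction.} Assume $\tau = \tau_{x_0}$. Define $\f\colon \Gamma\cup X\to X$ by $\f(\gamma)=\gamma x_0$ for $\gamma\in\Gamma$ and $\f|_X=\mathrm{id}_X$. Clearly $\f$ is a $\Gamma$-equivariant retraction, so the only nontrivial point is continuity at a boundary point. But if $(\gamma_n)\subset\G$ leaves every finite set and $\gamma_n\to x\in X$ in $\tau_{x_0}$, the defining property of $\tau_{x_0}$ says $\gamma_n\delta_{x_0}\to\delta_x$ weak*, i.e.\ $\gamma_n x_0\to x$ in $X$, which is exactly $\f(\gamma_n)\to \f(x)$. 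Continuity at points of $\Gamma$ and points of $X$ is automatic from the fact that the inclusions of $\Gamma$ and $X$ are homeomorphisms onto their images.

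\textbf{Hard direction.} Assume a $\Gamma$-equivariant retraction $\f\colon \Gamma\cup X\to X$ exists, and set $x_0:=\f(e)$. By equivariance, $\f(\gamma)=\f(\gamma\cdot e)=\gamma\cdot \f(e)=\gamma x_0$ for every $\gamma\in\Gamma$. To conclude $\tau=\tau_{x_0}$, I will verify the uniqueness hypotheses of Lemma~\ref{lem: measured compactification} for $\nu=\delta_{x_0}$. Condition (i) there is built into the definition of a $\Gamma$-compactification. For condition (ii), I need to show that for any sequence $(\gamma_n)\subset\G$ leaving every finite set of $\G$ and any $x\in X$,
\[
\gamma_n\to x \text{ in } \tau \quad\Longleftrightarrow\quad \gamma_n x_0\to x \text{ in } X .
\]

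\textbf{Verifying the equivalence.} The forward implication is immediate from continuity of $\f$: $\gamma_n\to x$ in $\tau$ gives $\gamma_n x_0=\f(\gamma_n)\to \f(x)=x$. For the reverse, assume $\gamma_n x_0\to x$. Since $(\Gamma\cup X,\tau)$ is compact metrizable, it suffices to show that every $\tau$-convergent subsequence of $(\gamma_n)$ has limit $x$; then every subsequence has a further subsequence converging to $x$, forcing $\gamma_n\to x$. Let $(\gamma_{n_k})$ be a convergent subsequence with limit $y\in\Gamma\cup X$. Since $(\gamma_n)$ leaves every finite set and the inclusion of $\G$ is a homeomorphism onto its image (which is thus discrete, hence open in $\G\cup X$), we must have $y\in X$. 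Applying $\f$ and using continuity,
\[
y=\f(y)=\lim_k \f(\gamma_{n_k})=\lim_k \gamma_{n_k} x_0 = x .
\]
This completes the verification, so by uniqueness $\tau=\tau_{x_0}$, with $x_0=\f(e)$ as asserted.

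\textbf{Main obstacle.} The statement is essentially a tautological consequence of equivariance together with the defining property of $\tau_{x_0}$; the only point requiring mild care is the passage from ``every accumulation point of $(\gamma_n)$ equals $x$'' to ``$\gamma_n\to x$'', which uses compactness and metrizability to rule out accumulation points in $\Gamma$, relying on the fact that the sequence leaves every finite subset of $\Gamma$.
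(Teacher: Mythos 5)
Your argument is correct, and in the converse (hard) direction it takes a genuinely different route from the paper. The paper proves that a retraction $\f$ forces the function-algebra decomposition $\Acal_\tau = c_0(\G)\oplus \mathrm{Im}\big(\Pcal_{\delta_{\f(e)}}\big)$ (by showing $\tilde f|_\G - (\tilde f|_X\circ\f)|_\G\in c_0(\G)$ and that the two summands intersect trivially) and then invokes Proposition~\ref{strucuture thm for measure comp.}; you instead verify directly that $\tau$ satisfies the two defining conditions of Lemma~\ref{lem: measured compactification} for $\nu=\delta_{\f(e)}$ — using equivariance to get $\f(\gamma)=\gamma x_0$, continuity of $\f$ for the forward implication, and a compactness/metrizability subsequence argument for the reverse — and conclude by the uniqueness clause of that lemma. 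Your approach is shorter and purely topological, bypassing the $C^*$-algebraic bookkeeping; the paper's approach buys the explicit algebra decomposition, which is what feeds into the parallel results (e.g.\ the orbital/UCP characterization). Two small points of care: the claim that accumulation points of a sequence leaving every finite set lie in $X$ needs openness of $\G$ in $\G\cup X$, which follows from local compactness of $\G$ (as the paper notes), not from discreteness of the subspace topology alone; and your appeal to uniqueness in Lemma~\ref{lem: measured compactification} is legitimate as that lemma is stated, with the sequential arguments unproblematic here since $\G\cup X$ is metrizable. The easy direction is essentially identical to the paper's.
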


\begin{proof}
Assume $\tau = \tau_{x_0}$ for some $x_0\in X$. Define $\varphi: \G\cup X\to X$ by $\varphi(\g)= \g x_0$ for all $\g\in\G$, and $\varphi(x)=x$ for all $x\in X$. Then $\varphi$ is a $\G$-equivariant map with $\f|_X=id_X$. We show it is continuous. For this, let $(\g_n)$ be a sequence in $\G$ such that $\g_n\to x\in X$. By the definition of the topology $\tau_{x_0}$, this means $\g_nx_0\to x$, and so $\varphi(\g_n)= \g_n x_0\to x = \varphi(x)$. Hence, $\varphi$ is continuous.

Conversely, assume $(\Gamma \cup X,\tau)$ is a $\Gamma$-compactification such that there exists a $\Gamma$-equivariant retraction $\f\colon \Gamma \cup X \to X$.
We show $C\ler{\Gamma\cup X}|_\G = c_0 \ler{\Gamma} \oplus \Image\big(\Pcal_{\delta_{\f\ler{e}}}\big)$, and then apply Proposition~\ref{strucuture thm for measure comp.} to conclude the result.

Let $\Tilde{f}\in C\ler{\Gamma\cup X}$, and denote $f=\Tilde{f}|_X$. Then, for every $\g\in \G$ we have
\[
\Pcal_{\delta_{\f\ler{e}}}f\ler{\gamma} =
f\ler{\gamma\f\ler{e}}=
 f\circ \f \ler\gamma ,
\]
and so $\Pcal_{\delta_{\f\ler{e}}} (\Tilde{f}|_X)= (\Tilde{f}|_X\circ \f)|_\G$.

Next, we show $(\tilde{f}-(f\circ\f))|_\G \in c_0\ler{\Gamma}$. Suppose for contradiction that this is not so, 
that is, that there exists an $\epsi>0$ and an injective sequence $(\gamma_n)_{n}$ in $\G$ such that
\begin{equation} \label{Jasmine}
\abs{\tilde{f}\ler{\gamma_n}-\tilde{f}\ler{\f(\gamma_n})} \geq \epsi
\end{equation}
for all $n$. 
By compactness, we may also assume without loss of generality that $\gamma_n\goes x\in X$. Since $\f$ is a retraction, we get
\[
0=\tilde{f}\ler{x}-\psi(\tilde{f}|_X)\ler{x} = \lim_{n\to\infty} f\ler{\gamma_n}-\psi(\tilde{f}|_X)\ler{\gamma_n},
\]
in contradiction.
Thus,
\[
\tilde{f}|_\G = \big(\tilde{f}-(f\circ\f)\big)|_\G + (f\circ\f)|_\G = \big(\tilde{f}-(f\circ\f)\big)|_\G + \Pcal_{\delta_{\f\ler{e}}}f\in c_0(\G) + {\rm Im}\left(\Pcal_{\delta_{\f\ler{e}}}\right),
\]
which shows $\Acal_\tau\subset c_0(\G) + {\rm Im}\left(\Pcal_{\delta_{\f\ler{e}}}\right)$.

Conversely, let $f\in C(X)$ and $\phi\in c_0(\G)$. Let $\Tilde{f}= f\circ\f\in C(\G\cup X)$ and let $\tilde{\phi}\in C(\G\cup X)$ be the continuous extension of $\phi$ by $\Tilde{\phi}|_X=0$. Then,
\[
f+\phi = (\Tilde{f}+\tilde{\phi})|_\G \in \Acal_\tau .
\]
Hence, $\Acal_\tau= c_0(\G) + {\rm Im}(\Pcal_{\delta_{\f\ler{e}}})$.

It remains to show $c_0(\G) \cap {\rm Im}\left(\Pcal_{\delta_{\f\ler{e}}}\right) = \{0\}$. For this, let $f\in C(X)$ be such that $\Pcal_{\delta_{\f\ler{e}}}f\in c_0(\G)$. Then, as shown above, it follows $f\circ\f= \Pcal_{\delta_{\f\ler{e}}}f\in c_0(\G)$. Since $f\in c_0(\G)\in C(\G\cup X)$ and $\G$ is dense in $\G\cup X$, and it follows $(f\circ\f)|_X= 0$, and since $\f$ is a retraction, $f=(f\circ\f)|_X= 0$.
This completes the proof.
\end{proof}
In fact, all orbital group compactifications satisfy a ``quasi'' version of equivariant no-retraction theorem as follows; the proof is similar to Theorem~\ref{one pt iff extension}, which we will omit.

\begin{theorem} 
A $\Gamma$-compactification $(\Gamma \cup X,\tau)$ is orbital if and only if there exists a continuous $\Gamma$-equivariant map $\f: \Gamma \cup X \to \pr(X)$ such that $\f(x)=\delta_x$ for all $x \in X$.
\end{theorem}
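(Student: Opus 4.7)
The plan is to mirror the proof of Theorem~\ref{one pt iff extension} with Dirac measures replaced by a general measure $\nu\in\pr(X)$, and use Proposition~\ref{strucuture thm for measure comp.} as the bridge between the existence of the map and the orbital structure.

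For the forward direction, assume $\tau = \tau_\nu$ is orbital. I define $\f\colon \G\cup X\to \pr(X)$ by $\f(\gamma)=\gamma\nu$ for $\gamma\in\G$ and $\f(x)=\delta_x$ for $x\in X$. Equivariance on $\G$ is immediate and equivariance on $X$ follows since the inclusion $x\mapsto \delta_x$ is equivariant. The restriction to $X$ is already continuous, the restriction to $\G$ is continuous because $\G$ is discrete, and the only remaining check is compatibility at boundary points: if $\gamma_n\to x\in X$ in $\tau_\nu$, then by the very definition of $\tau_\nu$, $\gamma_n\nu\to\delta_x$ weak*, i.e.\ $\f(\gamma_n)\to\f(x)$.

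For the converse, assume a continuous $\G$-equivariant $\f\colon \G\cup X\to \pr(X)$ with $\f(x)=\delta_x$ is given, and set $\nu:=\f(e)\in\pr(X)$; equivariance forces $\f(\gamma)=\gamma\nu$ for every $\gamma\in\G$. I will show $\Acal_\tau = \mathrm{Im}(\Pcal_\nu)\oplus c_0(\G)$ and then invoke Proposition~\ref{strucuture thm for measure comp.} to conclude $\tau=\tau_\nu$. The key construction is, for each $f\in C(X)$, the function $h_f\colon \G\cup X\to \CC$ defined by $h_f(y):=\int_X f\,d\f(y)$. Weak* continuity of evaluation together with continuity of $\f$ makes $h_f$ continuous on all of $\G\cup X$; moreover, $h_f|_X=f$ and $h_f|_\G=\Pcal_\nu f$, so $\Pcal_\nu f\in\Acal_\tau$ for every $f\in C(X)$.

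Given an arbitrary $\tilde f\in C(\G\cup X)$ with $f:=\tilde f|_X$, the difference $\tilde f-h_f$ lies in $C(\G\cup X)$ and vanishes on $X$, so by Lemma~\ref{a function vanishes on X iff in c0} its restriction to $\G$ lies in $c_0(\G)$. Hence $\tilde f|_\G = \Pcal_\nu f + \phi$ with $\phi\in c_0(\G)$, giving $\Acal_\tau\subseteq \mathrm{Im}(\Pcal_\nu)+c_0(\G)$. The reverse inclusion comes from extending any $\phi\in c_0(\G)$ to $\tilde\phi\in C(\G\cup X)$ by $\tilde\phi|_X\equiv 0$, so that $\Pcal_\nu f+\phi = (h_f+\tilde\phi)|_\G\in\Acal_\tau$. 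The sum is direct: if $\Pcal_\nu f\in c_0(\G)$, then $h_f|_\G\in c_0(\G)$ and Lemma~\ref{a function vanishes on X iff in c0} forces $f=h_f|_X\equiv 0$.

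There is no substantive obstacle beyond book-keeping; the only subtle point is verifying joint continuity of $h_f$ at boundary points, which dissolves once one notes that $\f$ is continuous into $\pr(X)$ equipped with the weak* topology and that $\mu\mapsto \mu(f)$ is weak* continuous. Unlike the point-orbital case of Theorem~\ref{one pt iff extension}, there is no need to handle a separate ``retraction'' identity here because $h_f|_X=f$ follows tautologically from $\f(x)=\delta_x$, whereas in the point case one had to carefully use $\f|_X=\mathrm{id}_X$ together with compactness to force the analogous identity.
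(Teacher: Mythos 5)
Your proof is correct and follows essentially the same route as the paper's (omitted) argument modeled on Theorem~\ref{one pt iff extension}: in the forward direction you take $\f(\g)=\g\nu$, and in the converse you set $\nu=\f(e)$, establish $\Acal_\tau=\mathrm{Im}(\Pcal_\nu)\oplus c_0(\G)$, and conclude via Proposition~\ref{strucuture thm for measure comp.}. The only cosmetic difference is that you obtain the $c_0(\G)$ membership of $\bigl(\tilde f-h_f\bigr)|_\G$ directly from Lemma~\ref{a function vanishes on X iff in c0} instead of the sequence-and-contradiction argument used in the point-orbital case, which is a legitimate streamlining.
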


With the complete characterization given by Theorem~\ref{one pt iff extension} in hand, we can now prove the equivariant analogue of Brouwer's No-Retraction theorem for several classes of group compactifications.
We begin with the important case of Gromov compactification of hyperbolic groups.

\begin{theorem}\label{non-retract thm in Gromov comp.}
Let $\Gamma$ be a finitely generated hyperbolic group and consider its Gromov-compactification $\ler{\Gamma\cup \partial \Gamma,\tau_G}$. Then $\tau_G$ is an orbital, but not point-orbital compactification.
\end{theorem}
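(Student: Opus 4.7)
The theorem has two parts which I would prove separately.

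For the orbitality of $\tau_G$, my plan is to exhibit a measure $\nu\in\pr(\partial\G)$ with $\tau_\nu=\tau_G$. A natural candidate is any atomless Borel probability measure on $\partial\G$ with full support (for instance a Patterson--Sullivan measure, or the hitting measure of a finitely supported symmetric random walk on $\G$ whose support generates $\G$). By the uniqueness in Lemma~\ref{lem: measured compactification}, it suffices to verify that for every unbounded sequence $(\g_n)$ in $\G$ and every $x\in\partial\G$, $\g_n\goes x$ in $\tau_G$ if and only if $\g_n\nu\goes\delta_x$ weak*. The forward direction uses the convergence-group property of $\G\acts\partial\G$: after passing to a subsequence, $\g_n^{-1}\goes y$ for some $y\in\partial\G$, and the resulting north--south dynamics yield $\g_n z\goes x$ uniformly on compact subsets of $\partial\G\setminus\{y\}$; since $\nu(\{y\})=0$ by atomlessness, dominated convergence gives $\g_n\nu\goes\delta_x$. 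The converse follows from compactness of $\tau_G$ and uniqueness of weak* limits applied to any accumulation point of $(\g_n)$.

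For the non-point-orbital claim, I would argue by contradiction. Assume $\tau_G=\tau_{x_0}$ for some $x_0\in\partial\G$; by Theorem~\ref{one pt iff extension}, every sequence $\g_n\goes\xi$ in $\tau_G$ must satisfy $\g_n x_0\goes\xi$. Fix a geodesic ray $e=\rho_0,\rho_1,\rho_2,\ldots$ in the Cayley graph of $\G$ (for any finite symmetric generating set) from $e$ to $x_0$, set $\g_n := \rho_n^{-1}$, and pass to a subsequence so that $\g_n\goes\xi$ in $\tau_G$ for some $\xi\in\partial\G$ (possible by compactness, as $|\g_n|=n\to\infty$). Since $\g_n^{-1}=\rho_n\goes x_0$, the contradictory assumption forces $\g_n x_0\goes\xi$ as well.

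The technical heart is the following geometric observation. The left translate $\g_n\cdot\rho$ of the ray $\rho$ is again a geodesic ray, namely from $\g_n$ to $\g_n x_0$, and it passes through $\g_n\rho_n=e$ at its $n$-th vertex. Thus $e$ lies on a geodesic from $\g_n$ to $\g_n x_0$, equivalently the Gromov product $\langle\g_n,\g_n x_0\rangle_e$ vanishes (or is bounded by $O(\delta)$ if one uses the boundary convention) uniformly in $n$. On the other hand, in a $\delta$-hyperbolic space the standard three-point inequality
\[
\langle p,q\rangle_e \;\geq\; \min\bigl(\langle p,\xi\rangle_e,\,\langle q,\xi\rangle_e\bigr)-\delta
\]
forces $\langle p_n,q_n\rangle_e\to\infty$ whenever $p_n,q_n\goes\xi$ in $\tau_G$. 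Applied to $p_n=\g_n$ and $q_n=\g_n x_0$, both converging to $\xi$ by our contradictory assumption, this gives $\langle\g_n,\g_n x_0\rangle_e\to\infty$, contradicting the uniform bound. Hence no such $x_0$ exists, and $\tau_G$ is not point-orbital.
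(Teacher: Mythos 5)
Your proposal is correct and follows essentially the same route as the paper: for orbitality you show that $\gamma_n \to x$ in $\tau_G$ forces $\gamma_n\nu \to \delta_x$ for a non-atomic $\nu$ (where the paper cites Kaimanovich's lemma, you re-derive it from north--south dynamics and dominated convergence) and then identify the two topologies, and for the non-point-orbital half you use exactly the paper's construction of inverting the prefixes of a geodesic ray toward $x_0$, observing that $e$ lies on the translated ray so that $\langle \gamma_n, \gamma_n x_0\rangle_e$ stays bounded while the assumption $\tau_G=\tau_{x_0}$ would force it to infinity. The only differences are cosmetic: the paper avoids Gromov products with boundary points by working with finite approximations and explicit $(2\delta+1)$-estimates, whereas you invoke the standard extension of the hyperbolicity inequality to the bordification; both versions carry the same implicit non-elementarity assumption needed for a non-atomic measure on $\partial\Gamma$ to exist.
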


\begin{proof}
Let $\nu\in\pr(\partial \G)$ be non-atomic. By \cite[Lemma 2.2 \& Proposition 7.5]{Kai00}, if $(\g_n)$ is a sequence in $\G$ such that $\g_n\to x\in \partial \G$ in $\tau_G$, then $\g_n\nu\to \delta_x$ weak*. Since both $\tau_G$ and $\tau_\nu$ are compact topologies, it follows that $\tau_G=\tau_\nu$, and hence $\tau_G$ is orbital.

To prove that $\tau_G$ is not point-orbital, assume for the sake of contradiction, that $\tau_G= \tau_{x_0}$ for some $x_0\in X$.
Let $\omega\colon\Z_{\ge 0}\to \G$ be a geodesic with $\g_0:=\omega(0)=e$ and such that $\g_n:=\omega(n)\to x_0$. For every $m< n\in \N$, since $e$ lies in a geodesic segment $[\g_m^{-1}\,,\, \g_m^{-1}\g_n]$, the Gromov product $\langle\,\g_m^{-1}\,,\, \g_m^{-1}\g_n\,\rangle_e = 0$, and therefore $\langle\,\g_m^{-1}\,,\, \g_m^{-1}x_0\,\rangle_e = 0$ for all $m\in\N$.
Let $\ler{\g_{m_j}^{-1}}_j$ be a subsequence that converges to infinity, say $\g_{m_j}^{-1}\to z\in \partial \G$. Then we also have $\g_{m_j}^{-1}x_0\to z$, and so $\exists N\in \N$ such that $\langle\,\g_{m_i}^{-1}\,,\, \g_{m_j}^{-1}\,\rangle_e>(2\delta+1)$ and $\langle\,z\,,\, \g_{m_j}^{-1}x_0\,\rangle_e> (2\delta+1)$
for all $i, j\ge N$, where $\delta$ is the hyperbolicity constant of $\G$. Fix $j\ge N$, and the second inequality further implies that $\exists M\ge N$ such that $\langle\,\g_{m_\ell}^{-1}\,,\, \g_{m_j}^{-1}\g_n\,\rangle_e\ge (2\delta+1)$ for all $\ell, n\ge M$.
However, then for $M<m_i<n$,
by $\delta$-hyperbolicity we have
\[
0 = \langle\,\g_{m_i}^{-1}\,,\, \g_{m_i}^{-1}\g_n\,\rangle_e \ge \min\{\langle\,\g_{m_i}^{-1}\,,\, \g_{m_j}^{-1}\,\rangle_e \,,\, \langle\,\g_{m_j}^{-1}\,,\, \g_{m_i}^{-1}\g_n\,\rangle_e\} - 2\delta
\ge 1 ,
\]
which is a contradiction.
    \end{proof}

\begin{corollary}
Let $\Gamma$ be a finitely generated hyperbolic group and consider its Gromov-compactification $\ler{\Gamma\cup \partial \Gamma,\tau_G}$. Then there exists no $\G$-equivariant retraction from the compactification $\G\cup \partial \Gamma$ onto its boundary $\partial \Gamma$.
\end{corollary}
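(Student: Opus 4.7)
The plan is to derive this corollary as an immediate consequence of the two main results developed in the paper, namely Theorem~\ref{one pt iff extension} and Theorem~\ref{non-retract thm in Gromov comp.}, without requiring any new geometric input. The architecture of the argument is: a $\G$-equivariant retraction onto the boundary exists if and only if the compactification is point-orbital (by the characterization in Theorem~\ref{one pt iff extension}), and the Gromov compactification is explicitly shown not to be point-orbital (by Theorem~\ref{non-retract thm in Gromov comp.}).

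More concretely, I would proceed by contradiction. Suppose $\f \colon \G \cup \partial \G \to \partial \G$ is a $\G$-equivariant continuous retraction, so that $\f|_{\partial \G} = \mathrm{id}_{\partial \G}$. The forward direction of Theorem~\ref{one pt iff extension} (which is actually the more substantial direction for this purpose) then yields that $\tau_G = \tau_{x_0}$ for the point $x_0 := \f(e) \in \partial \G$, i.e., the Gromov topology agrees with the point-orbital topology defined by $\delta_{x_0}$. This directly contradicts the second conclusion of Theorem~\ref{non-retract thm in Gromov comp.}, which asserts that $\tau_G$ is not point-orbital for any choice of base point in $\partial \G$. Hence no such retraction $\f$ can exist.

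Essentially no obstacle remains, since the heavy lifting — the coarse-geometric argument with Gromov products showing that no single orbit $\{\g x_0 : \g \in \G\}$ can track $\G$ in the boundary topology — has already been carried out in the proof of Theorem~\ref{non-retract thm in Gromov comp.}. The only minor point worth noting is that the statement implicitly requires $\partial \G$ to be non-empty (otherwise the claim is vacuous), and the invocation of Theorem~\ref{non-retract thm in Gromov comp.} is precisely where non-triviality of the boundary enters, via the existence of an infinite geodesic ray based at $e$.
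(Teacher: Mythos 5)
Your proposal is correct and follows exactly the route the paper intends: the corollary is an immediate combination of Theorem~\ref{one pt iff extension} (retraction exists iff the compactification is point-orbital) with Theorem~\ref{non-retract thm in Gromov comp.} (the Gromov compactification is not point-orbital). Nothing further is needed.
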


\subsection{
Projective compactifications}

Recall from \cite{Kai00} that a $\G$-compactification $(\G\cup X, \tau)$ is said to be \emph{projective} if the right action $\G\curvearrowleft\G$ extends to the trivial action of $\G$ on $X$, that is, if $(\g_n)$ is a sequence in $\G$ and $\g_n\to x\in X$, then $\g_n\g\to x\in X$ for every $\g\in\G$.

Gromov compactifications of hyperbolic groups are examples of projective compactifications. Although it is not completely evident in the proof of Theorem~\ref{non-retract thm in Gromov comp.}, we believe that projectivity is the main obstacle for point-orbitality.
In particular, we conjecture that the equivariant no-retraction theorem holds for all projective group compactifications.

The remainder of this section is devoted to some facts concerning this class of compactifications.

\medskip

Recall that an action $\Gamma\acts X$ of $\G$ on a compact metric space $(X, d)$ is called \textbf{distal} if for every pair of distinct points $x,y\in X$, $\inf\{d(\g x, \g y) : \g\in \G\}>0$. 

\begin{theorem}\label{thm:distal}
A projective group compactification with non-trivial distal boundary is not point-orbital.
\end{theorem}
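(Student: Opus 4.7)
The plan is to argue by contradiction: assume $\tau = \tau_{x_0}$ is point-orbital for some $x_0 \in X$. First I would observe that since $X$ is minimal as a $\G$-space and non-trivial, there must exist some $g \in \G$ with $g x_0 \neq x_0$; otherwise $x_0$ would be a global fixed point, and its orbit closure $\overline{\G x_0} = \{x_0\}$ would equal $X$ by minimality, contradicting $|X|\ge 2$.

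Next, by density of $\G$ in $\G \cup X$, pick any $x \in X$ together with a sequence $(\gamma_n)$ in $\G$ with $\gamma_n \to x$ in $\tau_{x_0}$. By the defining property of the point-orbital topology, this convergence in the compactification translates into the convergence $\gamma_n x_0 \to x$ in $X$. Now I apply the projectivity hypothesis to the sequence $(\gamma_n)$: since $\gamma_n \to x$ in $\G\cup X$, we also have $\gamma_n g \to x$ in $\G\cup X$. Applying the point-orbital definition once more to the sequence $(\gamma_n g)$, this means $(\gamma_n g) x_0 \to x$, i.e., $\gamma_n (g x_0) \to x$ in $X$.

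Consequently $d(\gamma_n x_0,\, \gamma_n g x_0) \to d(x,x) = 0$, where $d$ is any metric generating the topology on $X$. But $x_0$ and $g x_0$ are distinct points of $X$, so distality of the action $\G \acts X$ forces
\[
\inf_{\gamma \in \G} d(\gamma x_0,\, \gamma g x_0) > 0,
\]
which contradicts the above. Hence no such $x_0$ can exist, and the compactification is not point-orbital.

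The argument is essentially a two-line translation once the right dictionary is in place, so there is no serious obstacle. The only point that needs care is keeping straight the two uses of the point-orbital criterion (one to pass from $\gamma_n\to x$ to $\gamma_n x_0\to x$, and one to pass from $\gamma_n g\to x$ to $\gamma_n(gx_0)\to x$), and making sure that the element $g$ with $g x_0 \neq x_0$ is produced from minimality and non-triviality rather than assumed outright.
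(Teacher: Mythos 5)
Your proof is correct and follows essentially the same route as the paper: take a sequence with $\gamma_n x_0 \to x$, use projectivity to get $\gamma_n g x_0 \to x$, and let distality produce the contradiction. The only difference is cosmetic — you invoke minimality at the start to produce a single $g$ with $g x_0 \neq x_0$, whereas the paper applies distality to all $g$ first and uses minimality at the end to conclude $X$ is trivial.
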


\begin{proof}
Suppose, for the sake of contradiction, that there is a non-trivial distal minimal compact $\G$-space $X$ and a point $x_0\in X$ such that $\ler{\Gamma\cup X,\tau_{x_0}}$ is projective.
Let $\ler{\gamma_n}$ be a sequence in $\Gamma$ such that $\g_nx_0\to x\in X$. Then, by projectivity of $\tau_{x_0}$, we also get $\g_n\g x_0\to x$, for all $\g\in \G$. Since $\G\act X$ is distal, it follows $\g x_0= x_0$ for all $\g\in \G$. By minimality, it follows that $X$ is trivial, a contradiction.
\end{proof}

\begin{corollary}
Let $\ler{\Gamma\cup X,\tau}$ be a projective $\Gamma$-compactification such that $X$ is a non-trivial distal $\Gamma$-space. Then there is no $\G$-equivariant retraction from $\Gamma\cup X$ onto $X$.
\end{corollary}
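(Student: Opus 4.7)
The plan is to derive this immediately from the two results that precede it, combined via a simple contrapositive argument. Specifically, I would argue that the existence of a $\Gamma$-equivariant retraction $\Gamma\cup X \to X$ forces the compactification to be point-orbital (by Theorem~\ref{one pt iff extension}), while Theorem~\ref{thm:distal} rules this out under the projectivity and non-trivial distality hypotheses.

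In more detail, I would proceed by contradiction: suppose such an equivariant retraction $\f\colon \Gamma\cup X \to X$ exists. Then Theorem~\ref{one pt iff extension} applies and yields $\tau = \tau_{x_0}$ for $x_0 = \f(e) \in X$, that is, the compactification is point-orbital. But by hypothesis $\tau$ is projective and $X$ is non-trivial and distal, so Theorem~\ref{thm:distal} asserts precisely that $\tau$ cannot be point-orbital. This is the desired contradiction, so no such $\f$ exists.

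There is no real obstacle here — the corollary is a direct concatenation of the characterization of retractions (point-orbital $\Leftrightarrow$ equivariant retraction exists) with the structural obstruction (projective $+$ non-trivial distal $\Rightarrow$ not point-orbital). The only thing to double-check is that the hypotheses of Theorem~\ref{one pt iff extension} are in force, which they are since $(\Gamma\cup X,\tau)$ is assumed to be a $\Gamma$-compactification and $X$ is metrizable (as a distal $\Gamma$-space in the sense used here is given a metric). If one wishes to be entirely self-contained and bypass Theorem~\ref{thm:distal}, one could also argue directly: a point-orbital $\tau_{x_0}$ that is projective would satisfy $\g_n x_0 \to x$ implies $\g_n \g x_0 \to x$ for every $\g \in \G$, and distality would then force $\g x_0 = x_0$ for all $\g$, so minimality of $X$ would make $X$ a single point, contradicting non-triviality. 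Either way, the proof is essentially a one-line deduction from the preceding results.
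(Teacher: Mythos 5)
Your proposal is correct and matches the paper's intended argument exactly: the corollary follows by combining Theorem~\ref{one pt iff extension} (an equivariant retraction exists iff the compactification is point-orbital) with Theorem~\ref{thm:distal} (a projective compactification with non-trivial distal boundary is not point-orbital). Your ``self-contained'' alternative is simply the proof of Theorem~\ref{thm:distal} inlined, so it is not a different route.
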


In fact, projectivity, together with the point-orbitality of a group compactification, imposes topological restrictions on its boundary, as shown below.

\begin{proposition}\label{con-or-inf-dis}
The boundary of any projective point-orbital $\G$-compactification is either connected or has infinitely many connected components.
\end{proposition}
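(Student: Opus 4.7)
The plan is to argue by contradiction: suppose $X$ has finitely many connected components, say $X=C_1\sqcup\cdots\sqcup C_k$ with $2\le k<\infty$, and derive a contradiction from projectivity and point-orbitality. The first observation is that with only finitely many components, each $C_i$ is clopen (being closed with closed complement, a finite union of other components). Since $\G$ acts on $X$ by homeomorphisms, it permutes the components $\{C_1,\ldots,C_k\}$; the orbit of any single $C_i$ under $\G$ is a clopen $\G$-invariant subset of $X$, so by minimality $\G$ acts transitively on the components. This gives a homomorphism $\G\to S_k$ whose kernel $H$ is a finite-index subgroup of $\G$, with the key property that for $x_0\in C_1$ one has $\g x_0\in C_1$ if and only if $\g\in H$.

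Next, I would use the defining property of the point-orbital topology $\tau_{x_0}$ together with minimality: since $\G x_0$ is dense in $X$ and $C_1$ is open, pick $x\in C_1$ and an injective sequence $(\g_n)$ in $\G$ with $\g_n x_0 \to x$, which by definition means $\g_n\to x$ in $\tau_{x_0}$. Because $C_1$ is open in $X$, we have $\g_n x_0\in C_1$ for all sufficiently large $n$, hence $\g_n\in H$ eventually.

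Now invoke projectivity. Since $2\le k<\infty$, pick any $\g\in\G\setminus H$. By projectivity of $\tau_{x_0}$, the sequence $(\g_n\g)$ also converges to $x$ in $\tau_{x_0}$, i.e., $\g_n\g x_0\to x\in C_1$. But $\g_n\in H$ eventually and $\g\notin H$ force $\g_n\g\notin H$ eventually, so $\g_n\g x_0\notin C_1$ for all large $n$. Since $C_1$ is clopen, this contradicts $\g_n\g x_0\to x\in C_1$. Therefore the assumption $2\le k<\infty$ is untenable, and $X$ is either connected or has infinitely many components.

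I do not expect any serious obstacle here: the argument hinges on the two features made available by the hypotheses (cosets of $H$ are detected by which component $\g x_0$ lies in, and right multiplication by $\g$ is topologically invisible), which are quite rigidly incompatible when there are finitely many but more than one component. The only minor point to verify carefully is that the components are open, which is why the finite-cardinality assumption is used essentially.
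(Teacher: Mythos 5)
Your route is sound and, in substance, quite close to the paper's own argument: the paper works with exactly the same set $A=\{\g\in\G:\g x_0\in K\}$ (where $K$ is the clopen component containing $x_0$), shows via projectivity that $A$ is right-invariant up to finite sets and via connectedness of $K$ that it is symmetric, and then contradicts left-almost-invariance along a sequence with $\g_n x_0\to x_0$. You instead observe that this set is a subgroup and run a single coset argument after one application of projectivity, which is a cleaner way to reach the same contradiction.

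There is, however, one incorrectly stated step. For $x_0\in C_1$, the set $\{\g\in\G:\g x_0\in C_1\}$ is the setwise stabilizer of $C_1$, not the kernel $H$ of the homomorphism $\G\to S_k$: when $k\ge 3$ an element may preserve $C_1$ while permuting the remaining components, so your ``key property'' that $\g x_0\in C_1$ if and only if $\g\in H$ fails, and both places where you invoke it (deducing $\g_n\in H$ from $\g_n x_0\in C_1$, and deducing $\g_n\g x_0\notin C_1$ from $\g_n\g\notin H$) are unjustified as written. The repair is immediate: replace $H$ by $S:=\{\g\in\G:\g C_1=C_1\}$. Since distinct components are disjoint, $\g x_0\in C_1$ if and only if $\g\in S$, and by the transitivity you derived from minimality, $S$ has index $k\ge 2$, hence $\G\setminus S\neq\emptyset$; every subsequent step then goes through verbatim. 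A second, very minor point: density of the orbit $\G x_0$ in $X$ does not by itself yield an \emph{injective} sequence $(\g_n)$ with $\g_n x_0\to x$; it is cleaner to use density of $\G$ in the compactification, since any sequence of group elements converging to $x\in X$ must leave every finite subset of the discrete open set $\G$, and for such sequences $\g_n\to x$ in $\tau_{x_0}$ is by definition equivalent to $\g_n x_0\to x$.
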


\begin{proof}
For the sake of contradiction, assume that $(\G\cup X, \tau_{x_0})$ is projective for some $x_0\in X$, and $X$ is disconnected, with finitely many connected components. Thus, there exists a proper clopen connected component $K\subset X$ with $x_0\in K$. Define $A:=\{\g\in \G \,:\, \g x_0\in K\}$. 
Let $g\in \G$. Since $K$ is clopen, it follows that if $Ag\setminus A$ is infinite, then there is a sequence $(\g_n)$ in $A$ that converges in $K$, but $(\g_ng)$ converges in $X\setminus K$, which contradicts the projectivity. Thus, $A$ is right-invariant up to finite sets, that is, for every $g\in \Gamma$, the set $Ag \setminus A$ is finite. Moreover, for every $\g\in A$, since $x_0\in K\cap \g^{-1}K$, it follows $\g^{-1}K\subset K$, and so $\g^{-1}\in A$. Therefore, $A$ is symmetric, hence also is left-invariant up to finite sets. By minimality, the set $A$ is infinite, and there exists an injective sequence $(\g_n)$ in $A$ such that $\g_n x_0\to x_0$, so $\g_n\to x_0$ in $\tau_{x_0}$. 
Let $g\in A^c$. Then $g\g_n\to g x_0$, which implies that for infinitely many $n\in\mathbb{N}$, $g\g_n\notin A$. This contradicts left-invarinace of $A$ by finite sets, and completes the proof.
\end{proof}

We have further topological restriction on the boundary in the case of 1-ended groups.

\begin{proposition}
Let $\G$ be a finitely generated $1$-ended group. Then the boundary of any projective point-orbital $\G$-compactification must be connected.
\end{proposition}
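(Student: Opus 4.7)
The plan is to combine Proposition~\ref{con-or-inf-dis} with the standard end-theoretic characterization of subsets of $\Gamma$ with finite edge boundary in a Cayley graph. By Proposition~\ref{con-or-inf-dis}, the boundary $X$ of the given projective point-orbital compactification is either connected or has infinitely many connected components, so under the one-ended hypothesis it suffices to rule out the second alternative.

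Assume, toward a contradiction, that $X$ has infinitely many connected components; in particular $X$ is disconnected, so I can choose a proper clopen subset $K\subset X$ containing $x_0$ and set
\[
A:=\{\gamma\in\Gamma:\gamma x_0\in K\}.
\]
The first half of the proof of Proposition~\ref{con-or-inf-dis} --- which uses only that $K$ is clopen and that $\tau_{x_0}$ is projective, and not that $K$ is a single connected component --- shows that $Ag\setminus A$ is finite for every $g\in\Gamma$. Applying the same statement with $g^{-1}$ in place of $g$ and using the bijection $\delta\mapsto\delta g^{-1}$ from $A\setminus Ag$ onto $Ag^{-1}\setminus A$, one gets that $A\setminus Ag$ is finite as well; hence $A\triangle Ag$ is finite for every $g\in\Gamma$. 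In particular, for any finite generating set $S$ of $\Gamma$, the edge boundary of $A$ in $\mathrm{Cay}(\Gamma,S)$ is finite.

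On the other hand, minimality of $\Gamma\curvearrowright X$ implies that $\Gamma x_0$ is dense in $X$, and since both $K$ and $X\setminus K$ are nonempty open sets, both $A$ and $\Gamma\setminus A$ are infinite. Now I invoke the standard end-theoretic fact (Freudenthal--Hopf / Stallings): the number of ends of $\Gamma$ is at least two precisely when there exists a subset of $\Gamma$ whose symmetric difference with every translate by a generator is finite and which together with its complement is infinite. Our set $A$ does the job, so $\Gamma$ has at least two ends, contradicting the one-ended hypothesis.

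The proof is essentially a repackaging of Proposition~\ref{con-or-inf-dis}, so the only point that requires care is the observation that the connectedness of the clopen set in the proof of that proposition was used only for the \emph{symmetry} step $\gamma^{-1}K\subset K$; replacing that step with the left--right bookkeeping above yields the almost-invariance $|A\triangle Ag|<\infty$ directly, without needing $K$ to be a single component. The remainder is the standard dictionary between finite-coboundary subsets of $\Gamma$ and the space of ends, so I do not expect any substantial obstacle beyond this bookkeeping.
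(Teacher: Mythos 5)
Your proof is correct and follows essentially the same route as the paper: from a proper clopen set $K\ni x_0$ one forms $A=\{\gamma\in\Gamma:\gamma x_0\in K\}$, uses projectivity (exactly as in the proof of Proposition~\ref{con-or-inf-dis}, where connectedness of $K$ is indeed not needed for this step) to get that $A$ is almost invariant with $A$ and $\Gamma\setminus A$ infinite, and then invokes the standard finite-coboundary characterization of ends to contradict one-endedness. The only cosmetic difference is your opening appeal to Proposition~\ref{con-or-inf-dis} to reduce to the infinitely-many-components case, which is redundant since your construction rules out any disconnected boundary directly—precisely what the paper does.
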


\begin{proof}
Suppose $(\G\cup X, \tau_{x_0})$ is projective for some $x_0\in X$, where $X$ is disconnected. 
Then similarly as in the proof of Proposition~\ref{con-or-inf-dis}, $\G$ has a subset $A$ that is right-invariant up to finite sets, and both $A$ and $X\setminus A$, its complement in $X$, are infinite.
The existence of such subset $A\subset \G$ implies that $\G$ has more than one end. 
\end{proof}

We observe below that, as one would expect, projective group compactifications of 2-ended groups have finite boundary.

\begin{proposition}\label{2-end}
Suppose $\Gamma$ contains an infinite subgroup whose centralizer has finite index in $\G$ (e.g. $\Gamma$ is $2$-ended), then every projective $\G$-compactification has finite boundary.
Consequently, the equivariant no-retraction theorem holds for any such compactification with non-trivial boundary.
\end{proposition}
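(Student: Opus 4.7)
The plan is to first establish finiteness of $X$, and then to derive the no-retraction consequence from Theorem~\ref{one pt iff extension}. Let $H\le \G$ be the given infinite subgroup with centralizer $C:=C_\G(H)$ of finite index in $\G$. I would begin by choosing an injective sequence $(h_n)$ in $H$ and, by compactness of $\G\cup X$, pass to a subsequence with $h_n\to x$ for some $x\in X$. The key observation is that for every $c\in C$ the identity $ch_n=h_nc$ allows one to combine continuity of the left action (which yields $ch_n\to cx$) with projectivity (which yields $h_nc\to x$), forcing $cx=x$. Since $[\G:C]<\infty$, the orbit $\G x$ then has at most $[\G:C]$ points; being a finite (hence closed) $\G$-invariant subset of the minimal $\G$-space $X$, it must equal $X$, so $X$ is finite.

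For the consequence, suppose further that $X$ is non-trivial and assume towards contradiction that there is a $\G$-equivariant retraction $\G\cup X\to X$. By Theorem~\ref{one pt iff extension}, this forces $\tau=\tau_{x_0}$ for some $x_0\in X$. Since $X$ is finite and minimal, the $\G$-action on $X$ is transitive and factors through a finite quotient, so the kernel $N$ of the action homomorphism $\G\to \text{Sym}(X)$ has finite index, and hence is infinite. I would then fix $g\in\G$ with $x_1:=gx_0\ne x_0$ and any injective sequence $(\gamma_n)$ in $N$. Since $\gamma_n x_0=x_0$ for all $n$, the definition of $\tau_{x_0}$ gives $\gamma_n\to x_0$; projectivity then yields $\gamma_n g\to x_0$, so $(\gamma_n g)x_0\to x_0$ in $X$. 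On the other hand $(\gamma_n g)x_0=\gamma_n x_1=x_1$ is the constant sequence $x_1\ne x_0$, a contradiction.

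The step I expect to require the most care is the finiteness argument: projectivity is intrinsically a statement about right translates, and the centralizer identity $ch_n=h_nc$ is the bridge that converts it into a constraint on the left action through the point $x$. Once $X$ is known to be finite, the rest is a clean bookkeeping argument combining Theorem~\ref{one pt iff extension} with projectivity applied to sequences in the finite-index kernel $N$; the fact that such sequences act trivially on $X$ while projectivity propagates their convergence under right multiplication by arbitrary $g\in\G$ is exactly what is incompatible with $\tau$ being point-orbital over a non-trivial transitive finite $\G$-space.
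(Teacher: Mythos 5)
Your proof is correct. The finiteness half is essentially the paper's argument verbatim: take an injective sequence $(h_n)$ in $H$ converging to some $x\in X$, use the centralizer identity $ch_n=h_nc$ to play continuity of the left action ($ch_n\to cx$) against projectivity ($h_nc\to x$), conclude $cx=x$ for all $c\in C_\Gamma(H)$, and then finite index plus minimality force $X=\Gamma x$ to be finite. Where you diverge is the ``consequently'' part. The paper observes that a finite $\Gamma$-space is distal and invokes Theorem~\ref{thm:distal} (a projective compactification with non-trivial distal boundary is not point-orbital), then concludes via Theorem~\ref{one pt iff extension}. You instead argue directly: assuming a retraction, Theorem~\ref{one pt iff extension} gives $\tau=\tau_{x_0}$; the kernel $N$ of $\Gamma\to\mathrm{Sym}(X)$ is infinite of finite index, any injective sequence in $N$ converges to $x_0$ in $\tau_{x_0}$, and projectivity applied with a $g$ moving $x_0$ forces the constant value $gx_0$ to converge to $x_0$, a contradiction. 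This is a legitimate, self-contained and more elementary route for the finite case (it is in effect a hands-on specialization of the distal argument), at the cost of not exhibiting the general structural statement about distal boundaries that the paper's proof leverages; the paper's route is shorter here because Theorem~\ref{thm:distal} is already available, and it covers all distal boundaries at once rather than only finite ones.
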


\begin{proof}
Let $H\leq\Gamma$ be an infinite subgroup and assume that its centralizer, $C_\Gamma\ler{H}$, is of finite index in $\Gamma$. Suppose that $\ler{\Gamma\cup X , \tau}$ is a projective $\G$-compactification. Let $x\in X$ be a limit point of $H$, that is, $h_n\to x$ for some sequence $(h_n)$ in $H$.
For every $\g\in C_\Gamma\ler{H}$, by projectivity,
\[
x = \lim_{n\to\infty}h_n = \lim_{n\to\infty} {h_n} \g  =\lim_{n\to\infty} \g {h_n} 
=\g \lim_{n\to\infty} {h_n} 
= \g x ,
\] 
which shows that $x$ is fixed by $C_\Gamma\ler{H}$. Since $\G\act X$ is minimal and $C_{\Gamma}(H)$ has finite index in $\G$, it follows that $X$ is finite.

In particular, since every finite $\G$-space is distal, it follows from Theorem~\ref{thm:distal} that if $X$ is non-trivial, then there is no $\G$-equivariant retraction from $\G\cup X$ onto $X$.
\end{proof}

    \printbibliography

   \end{document}